\numberwithin{equation}{section}
\newcommand{\eqindent}{\displayindent0pt\displaywidth\textwidth}
\newtheorem{theorem}{Theorem}[section]
\newtheorem{lemma}[theorem]{Lemma}
\newtheorem{proposition}[theorem]{Proposition}
\newtheorem{corollary}[theorem]{Corollary}
\newtheorem{problem}{Problem}
\theoremstyle{definition}
\theoremstyle{remark}
\newtheorem{remark}[theorem]{Remark}
\newcommand{\norm}[1]{\| {#1}\| }
\newcommand{\set}[1]{\left\{#1\right\}}
\newcommand{\void}{\varnothing}
\newcommand{\abs}[1]{\left|#1\right|}
\newcommand{\Rn}{\mathbb{R}^n}
\newcommand{\C}{\mathbb{C}}
\newcommand{\R}{\mathbb{R}}
\newcommand{\brac}[1]{\left(#1\right)}
\newcommand{\jbrac}[1]{\langle#1\rangle}
\newcommand{\ip}[1]{\jbrac{#1}}
\renewcommand{\epsilon}{\varepsilon}
\newcommand{\eps}{\epsilon}
\definecolor{amaranth}{rgb}{0.9, 0.17, 0.31}
\newcommand{\rank}{\mathrm{rk}}
\newcommand{\g}{\gamma}
\newcommand{\gp}{\g_+}
\newcommand{\V}{\mathbb{V}}
\newcommand{\sym}{\mathcal{S}}
\newcommand{\psd}{\mathcal{S}_+}
\newcommand{\tr}{\mathrm{tr}}
\newcommand{\rk}{\mathrm{rk}}
\newcommand{\Sone}{\Sigma_1^n}
\renewcommand{\L}{\mathcal{B}}
\newcommand{\M}{\mathcal{M}}
\newcommand{\kmax}{{k_{\max}}}
\renewcommand{\succ}{\succcurlyeq}
\title{Factorization of positive-semidefinite operators with absolutely summable entries}
\author[1]{Radu Balan}
\author[2]{Fushuai Jiang}
\affil[1]{Department of Mathematics, University of Maryland - College Park}
\affil[2]{Department of Mathematics, City University of Hong Kong}
\date{{\small 
% Norbert Wiener Center for Harmonic Analysis and Applications
% \\
% Department of Mathematics\\ University of Maryland, College Park
% \\~\\
MSC 2020: 15A60, 47B10, 42C15 (Secondary)\\
Keywords: Feichtinger algebra, nuclear operators, 2-summing operators, positive-semidefinite matrices
}
}
\begin{document}
\maketitle

\begin{abstract}
    A problem by Feichtinger, Heil, and Larson asks whether every infinite matrix $A$ with $\sum_{k,l}|A_{kl}| < \infty$ (an equivalent substitute for the Feichtinger algebra) that is positive-semidefinite 
    %and self-adjoint 
    admits a symmetric rank-one decomposition $A = \sum_k f_k^*\otimes f_k$ with $\sum_k \|f_k\|_{1}^2 < \infty$. 
    In the finite-dimensional setting, we analyze the corresponding quantitative $\ell_1^n$ optimization problem by an exact reformulation as a linear program over measures, derive its dual, and prove strong duality. We then obtain an equivalent adjoint formulation regarding the quality of a convex relaxation. 
    In the infinite-dimensional setting, we first provide a negative answer to this question using a concurrent finite-dimensional result by Bandeira-Mixon-Steinerberger. We further study the collection of operators for which such decomposition exists, showing that they are dense in suitable topology and invariant under the action of the positive-coefficient analytic Wiener subalgebra. In addition, we give a sufficient condition for successful rank-one decomposition in terms of $2$-summing factorization, and we characterize exactly when $A^{1/2}$ is $2$-summing.
    
\end{abstract}

\section{Introduction}

In this paper, we study the following problem, first posed by Hans Feichtinger at a 2004 Oberwolfach meeting, later modified by Heil and Larson \cite{HeilLarson08}, and further reformulated by the first author and his collaborators \cite{BOP18, BORWZ}. 

\begin{problem}[Feichtinger-Heil-Larson]\label{prob:Feichtinger}
    Let $A: \ell_2 \to \ell_2$ be a Hermitian positive semidefinite operator with
    \begin{equation}
        \norm{A}_{1,1}:= \sum\limits_{k,l = 1}^{\infty}\abs{\jbrac{A\delta_k, \delta_l} }< \infty.
        \label{eq:condition-nuclear-intro}
    \end{equation}
    Can we always find a sequence $(f_k) \subset \ell_1$ such that 
    \begin{equation}
        A = \sum\limits_{k=1}^{\infty} f_k^*\otimes f_k
    \quad\text{and}\quad
    \sum\limits_{k=1}^{\infty}\norm{f_k}_1^2 < \infty \,?
    \label{eq:condition-strongl2-intro}
    \end{equation}
\end{problem}

We call any $(f_k)_{k=1}^\infty \subset \ell_1$ satisfying the summability condition in \eqref{eq:condition-strongl2-intro} a strong $\ell_2(\ell_1)$ sequence. 

The original problem by Feichtinger referred specifically to eigendecomposition. It was shown in \cite{BOP18} that the eigendecomposition may not provide a strong $\ell_2(\ell_1)$ sequence even though each individual eigenvector belongs to $\ell_1$  (as shown in \cite{HeilLarson08}). Based on a recent construction in \cite{BMS24}, this paper constructs an example that answers negatively to this problem. To do so, we first need to explore the finite-dimensional setting.

Let $\sym^n$ denote the vector space of self-adjoint $n\times n$ matrices. For $A \in\sym^n$, we define
\begin{equation*}
    \norm{A}_{1,1} = \sum_{k,l = 1}^n \abs{A(k,l)}.
\end{equation*}
Let $\psd^n\subset \sym^n$ denote the cone of positive semidefinite matrices. For $A \in \psd^n$, define
\begin{equation}\label{eq:gamma+}
    \gp(A) := \inf \set{\sum_{k = 1}^N \norm{x_k}_{1}^2 : A = \sum_{k = 1}^N x_kx_k^*}.
\end{equation}

The finite-dimensional (or quantitative) variant of Problem \ref{prob:Feichtinger} is as follows.

\begin{problem}\label{prob:matrix}
    For $n \in \mathbb{N}$, let 
    \begin{equation*}
        C_n:= \sup_{A \in \psd^n\setminus\set{0}}\frac{\gp(A)}{\norm{A}_{1,1}}.
    \end{equation*}
    Is there a universal constant $C_0 \geq 1$ such that $C_n\leq C_0$ for all $n$?
\end{problem}

\subsection{Main results and the structure of this paper}

In the finite-dimensional setting, our main contribution is three-fold.

In Theorem \ref{thm:gamma+superresolution}, we establish that (\ref{eq:gamma+}) -- which is a non-convex optimization problem -- is equivalent to the infinite-dimensional linear program:
\begin{equation}
\eqindent
    \gamma_+(A) = \inf\set{ \int_{\Sone}d\mu : A = \int_{\Sone}xx^*d\mu(x),\, \mu \in \M_+(\Sone) }
    \label{eq:intro-gamma+-measure}
\end{equation} 
where the infimum ranges over all Borel measures supported on the $\ell_1$-unit sphere $\Sone = \set{\|x\|_1 = 1}$ in $\R^n$ or $\C^n$. 

In Theorem \ref{thm:strong duality}, we establish the formal dual of the optimization problem in \eqref{eq:intro-gamma+-measure} show that there is {\em no duality gap}, even though we are working with an infinite-dimensional vector space of measures. More concretely, an alternative formula for $\gp$ is given by
    \begin{equation}
    \eqindent
        \gp(A) = \sup\set{\tr(AT) : T \in \sym^n \text{ and } \jbrac{Tx,x} \leq 1 \,,\forall x \in \Sone }
        \label{eq:intro-gamma+-dual}
    \end{equation}
This paves the path for the following result.

In Theorem \ref{thm:equivalence:pi-gamma} (and Theorem \ref{thm:5.1}), we use trace duality to establish an adjoint formulation for Problem \ref{prob:matrix}. In particular,
    \begin{equation}
    \label{eq:intro-dualconvexrelax}
    \eqindent
        \frac{1}{C_n}\max_{A \in \psd^n, \,\norm{A}_{1,1} \leq 1}  \tr(AT) \leq \max_{\norm{x}_{1}^2\leq 1} \jbrac{Tx,x} \leq \max_{A \in \psd^n, \,\norm{A}_{1,1} \leq 1} \tr(AT)
    \end{equation}
    where the optimal $C_n \geq 1$ is the same as the one in Problem \ref{prob:matrix}.
    Note that the optimization problem in the middle is of main interest. It can be written as $\tr[(xx^*)T]$, and the feasible region only consists of truncated rank-one positive-semidefinite rays $xx^*$. On the other hand, the alternative optimization problem ranges over the convex region $\norm{A}_{1,1} \leq 1$ in the cone $A \succ 0$. In Theorem \ref{thm:pi+dual}, we also give an explicit formula for solving this convex optimization problem. 
    See Section \ref{sect:psd-relaxation} for further discussion on the convex relaxation.

The adjoint aforementioned formulation was first announced in the Spring Southeastern AMS Section Meeting 2024 \cite{BalanAMS24}. In \cite{BMS24}, the authors showed that a universal constant $C_0$ in Problem \ref{prob:matrix} cannot exist. In fact, 
\begin{equation}\label{eq: Cn BMS}
    C_n \geq c\sqrt{n}
\end{equation}
where $c > 0$ is some absolute constant. Their construction is done in the adjoint. 
% Thereupon, we show in Theorem \ref{thm:Feichtinger->matrix} that the answer to Problem \ref{prob:Feichtinger} is also no. 

On the other hand, it is known in \cite{BORWZ} that for certain classes of matrices -- including rank-one, diagonally dominant, or $2\times2$ matrices -- the constant $C_n$ may taken to be exactly $1$. In Proposition \ref{prop:special-case-CP}, we expand these known cases to include completely positive matrices. These are the matrices $A \in \psd^n$ that admit a symmetric nonnegative matrix factorization, i.e., $A = XX^*$ with $X_{ij} \geq 0$ for all $1 \leq i \leq n$ and $1 \leq j \leq N$. In addition, we establish in Corollary \ref{cor:combined estimate} below that there is a rank-dependent upper bound for $C_n$: 
\[
\gp(A) \leq  \sqrt{n}\cdot \min\set{\rk(A), \sqrt{n}} \cdot \norm{A}_{1,1}
\text{ for all } A \in \psd^n. 
\]

In Section \ref{sect:infinite-dim problem}, we explore the infinite-dimensional setting and study Problem \ref{prob:Feichtinger}. We establish several results of various flavors. For simplicity, we use $\Gamma_+$ to denote the family of operators that admit a strong $\ell_2(\ell_1)$ factorization, i.e., satisfying \eqref{eq:condition-strongl2-intro}.

To begin with, we construct an example in Theorem \ref{thm:Feichtinger->matrix} that answers Problem \ref{prob:Feichtinger} negatively. This relies on a joint effort with \cite{BMS24}, which establishes \eqref{eq: Cn BMS}.

In section \ref{sect:topology and density}, we introduce a topology $\tau$ on self-adjoint positive operators induced by (the infinite-dimensional variant of) $\gp$, which characterizes ``approximation from above'' with respect to the partial ordering on the positive cone. 
We study the properties of $\tau$ in Proposition \ref{prop:topology, basis, non-second countability}, and in particular, show that $\tau$ is not second countable, which is a consequence of the example constructed in Theorem \ref{thm:Feichtinger->matrix}. In Theorem \ref{thm:density}, we show that the family of those satisfying \eqref{eq:condition-nuclear-intro} is the $\tau$-closure of $\Gamma_+$.

The main result of Section \ref{sect:holomorphic action}, Theorem \ref{thm:holomorphic action}, states that $\Gamma_+$ is invariant under the action of positive-coefficient subalgebra of the analytic Wiener algebra. The proof of Theorem \ref{thm:holomorphic action} makes use of two ingredients of independent interest: the first ingredient is a variant of the celebrated Grothendieck-Pietsch factorization through $\ell_2$ using the square root operator (which can be explicitly given via the spectral theorem); the second ingredient states that the quadratic action of $\ell_1$ endomorphisms also leaves $\Gamma_+$ invariant.

In Section \ref{sect:2-summing}, we provide new insight into the relationships between $\Gamma_+$-membership, summability of eigendecomposition, and $2$-summing operators $\ell_2 \to \ell_1$. In Theorem \ref{thm:2-summing factorization, sufficient Type A}, we show that if $A = TT^*$ where $T:\ell_2\to \ell_1$ is $2$-summing, then any eigendecomposition of $A$ is a strong $\ell_2(\ell_1)$ sequence (so $A\in \Gamma_+$). Finally, we show in Theorem \ref{thm:square root 2-summing, sufficient, square root diagonal} that $A^{1/2}:\ell_2 \to \ell_1$ is $2$-summing if and only if $\sum_k \sqrt{A_{kk}} < \infty$, thus providing a complete characterization of a special subfamily of $\Gamma_+$.

\subsection*{Acknowledgment}

The first author has been supported in part by
the National Science Foundation under grant DMS-2108900 and by the Simons Foundation. He also thanks Afonso Bandeira, Dustin Mixon, and Stefan Steinerberger for providing an early copy of \cite{BMS24}.

The second author is partially supported by a grant from
the City University of Hong Kong (Project No. 7200844).
Part of this material is also based on work supported by the National Science Foundation under Grant No. DMS-1439786 while the second author was in residence at the Institute for Computational and Experimental Research in Mathematics in Providence, RI, during the fall 2022 semester. He thanks Carsten Schuett for his lecture and discussion on $p$-summing operators. 

Both authors thank the anonymous referee for their valuable comments and suggestions.

\subsection*{Notations}

\paragraph{Banach spaces and operators} Let $X$ be a Banach space. We use $B_X$ to denote the closed unit ball of $X$. We use $\jbrac{\cdot,\cdot}_{X^*,X}$ to denote the duality pairing. When the choice of $X$ is clear from context, we simply write $\jbrac{\cdot,\cdot}$. For a pair of Banach spaces $X$ and $Y$, we use $\L(X,Y)$ to denote the Banach space of bounded linear operators from $X$ to $Y$ with the operator norm $\norm{\cdot}_{X\to Y}$.
Let $x^* \in X^*, y \in Y$, we write $yx^* $ or $ x^*\otimes y : X \to Y$ to denote the rank-one operator defined by $[{x^*\otimes y}](v) = \jbrac{x,v}y$.   

\paragraph{Finite dimensional normed spaces} For $x \in \R^n$ or $\mathbb{C}^n$, we define $\norm{x}_p := (\sum_{k=1}^n\abs{x(k)}^p)^{1/p}$ with the usual modification for $p = \infty$. Here, $x(k)$ is the $k$-th entry of $x$. We use $\ell_p^n$ to denote $\R^n$ or $\C^n$ equipped with $\norm{\cdot}_p$. We use $\Sigma_p^n$ to denote the unit sphere in $\ell_p^n$. 

\paragraph{Sequence spaces} We write $\ell_p$ for $\ell_p(\mathbb{N})$ with $\norm{\cdot}_p$ defined similarly. 
\  We use $\set{\delta_k}$ to denote the standard basis of $\ell_2$. We use $c_0\subset \ell_\infty$ to denote the Banach subspace of sequences converging to zero. Recall that $c_0^* \simeq \ell_1$.  

\paragraph{Matrices and operators} We use $\M^n$ to denote the vector space of matrices with real or complex entries, $\sym^n$ to denote the subspace of symmetric or Hermitian matrices, and $\psd^n\subset \sym^n$ to denote the cone of positive-semidefinite matrices. We also use the notation $A \succ 0$ when $A: \ell_2 \to \ell_2$ is positive-semidefinite as a bilinear form on $\ell_2$, i.e., $\ip{Ax,x} \geq 0$ for all $x \in \ell_2$.

\section{The norms $\g$ and $\gp$}
\label{sect:nuclear}

Let $p,q \in [1,\infty]$ and $A: \ell_p \to \ell_q$. The operator norm of $A$ is defined to be
\begin{equation*}
    \norm{A}_{p\to q} := \sup_{\norm{z}_p \leq 1}\norm{Az}_q.
\end{equation*}
We have
\begin{equation*}
    \norm{A}_{r \to q} \leq \norm{A}_{\infty \to 1}
    \text{ for all } 1 \leq r,q \leq \infty.
\end{equation*}
We also define the ``entry-wise'' norm
\begin{equation*}
    \norm{A}_{q,p} := \brac{
    \sum_{k}
    \brac{\sum_{l} \abs{\jbrac{A\delta_k,\delta_l}}^{q} }^{\frac{p}{q}}
    }^{1/p} \in [0,\infty],
\end{equation*}
with suitable modification when $p$ or $q$ is $\infty$. 
If $A$ is a finite-dimensional matrix, this is simply $\norm{A}_{q,p} = \brac{\sum_{k = 1}^{n} \norm{\mathrm{Col}_kA}_q^p}^{1/p}$
with suitable modification when $p$ or $q$ is $\infty$.

\subsection{Nuclear operators}

Let $X$ and $Y$ be Banach spaces. A bounded linear operator $A:X \to Y$ is nuclear if there are sequences 
    $(x_k^*) \subset X^* $ and $(y_k) \subset Y $ such that
    \begin{equation}
        A = \sum_{k = 1}^\infty x_k^*\otimes y_k
        \label{eq:nuclear-representation}
    \end{equation}
    in the strong operator topology. The nuclear norm (or projective norm) of the operator $A$ is defined as
    \begin{equation}
        \g(A:X \to Y) = \inf \sum_{k=1}^{\infty} \norm{x_k}_{X^*}\norm{y_k}_{Y}
        \label{eq:def:gamma}
    \end{equation}
    where the infimum is taken over all representations of the form \eqref{eq:nuclear-representation}. 

   Suppose $A$ is nuclear and let $A = \sum_k x_k^*\otimes y_k$ be a strongly convergent series with $\sum_{k}\norm{x_k}_{X^*}\norm{y}_Y < \infty$. Then the series is also convergent in the operator norm. Thus, every nuclear operator is a limit of finite rank operators, and hence compact. 

Nuclear operators were originally introduced by Grothendieck in his doctoral dissertation \cite{Groth54}. See \cite{DJT95,nuclear-book} for a systematic treatment. The nuclear norm is usually denoted by $\nu$ in the standard literature, but we keep the notation $\g$ to be consistent with the previous work \cite{BORWZ} on Problem \ref{prob:matrix}.

\begin{proposition}\label{prop:nuclear-vs-entry}
    Let $1 \leq p < \infty$ and $A : c_0 \to \ell_p$. Then 
    \begin{equation*}
        \g(A:c_0 \to \ell_p)= \norm{A}_{p,1}
    \end{equation*}
\end{proposition}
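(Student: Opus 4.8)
The plan is to prove the two inequalities $\g(A)\le\norm{A}_{p,1}$ and $\norm{A}_{p,1}\le\g(A)$ separately, allowing either side to be $+\infty$ (adopting the convention $\g(A)=+\infty$ when $A$ is not nuclear, so that the identity also covers that case). Throughout I use that $c_0^*\simeq\ell_1$ and that, directly from the definition of $\norm{\cdot}_{p,1}$, one has $\norm{A}_{p,1}=\sum_k\norm{A\delta_k}_{\ell_p}$, where $A\delta_k$ is the $k$-th ``column'' of $A$.

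For the bound $\g(A)\le\norm{A}_{p,1}$, I would exhibit the ``canonical'' nuclear representation of $A$. Let $\delta_k^*\in c_0^*\simeq\ell_1$ denote the $k$-th coordinate functional, so that $\norm{\delta_k^*}_{\ell_1}=1$. For each $z\in c_0$ the partial sums $\sum_{k\le N}z(k)\delta_k$ converge to $z$ in $c_0$ (this is the one place where the domain being $c_0$, rather than $\ell_\infty$, matters), so continuity of $A$ gives $Az=\sum_k z(k)\,A\delta_k$ in $\ell_p$; equivalently, $A=\sum_k\delta_k^*\otimes(A\delta_k)$ with convergence in the strong operator topology. Plugging this representation into the definition \eqref{eq:def:gamma} of $\g$ yields $\g(A)\le\sum_k\norm{\delta_k^*}_{\ell_1}\norm{A\delta_k}_{\ell_p}=\sum_k\norm{A\delta_k}_{\ell_p}=\norm{A}_{p,1}$. (If the right-hand side is infinite this is vacuous; if it is finite, the same computation shows in particular that $A$ is nuclear.)

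For the reverse bound, let $A=\sum_k x_k^*\otimes y_k$ be an arbitrary nuclear representation, with $(x_k^*)\subset\ell_1$, $(y_k)\subset\ell_p$, and $\sum_k\norm{x_k^*}_{\ell_1}\norm{y_k}_{\ell_p}<\infty$. Evaluating the strongly convergent series at the basis vector $\delta_j$ gives $A\delta_j=\sum_k\jbrac{x_k^*,\delta_j}\,y_k=\sum_k x_k^*(j)\,y_k$ in $\ell_p$, so the triangle inequality yields $\norm{A\delta_j}_{\ell_p}\le\sum_k\abs{x_k^*(j)}\,\norm{y_k}_{\ell_p}$. Summing over $j$ and interchanging the order of summation (permissible since every summand is nonnegative) gives
\[
\norm{A}_{p,1}=\sum_j\norm{A\delta_j}_{\ell_p}\le\sum_k\norm{y_k}_{\ell_p}\sum_j\abs{x_k^*(j)}=\sum_k\norm{x_k^*}_{\ell_1}\norm{y_k}_{\ell_p}.
\]
Taking the infimum over all nuclear representations of $A$ gives $\norm{A}_{p,1}\le\g(A)$, which together with the first inequality proves the proposition.

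No step here is genuinely difficult; the points that need a little care are (i) the verification that the canonical series converges in the strong operator topology, which uses density of the finitely supported sequences in $c_0$; (ii) the bookkeeping that lets $\g(A)$ and $\norm{A}_{p,1}$ be infinite, so that the statement also applies to operators that are merely bounded and not nuclear; and (iii) the justification of the interchange of summation in the displayed chain of inequalities, for which nonnegativity of all terms suffices. Of these, (i) is the one I would be most careful to phrase precisely.
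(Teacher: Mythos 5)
Your proof is correct and follows essentially the same approach as the paper's: the canonical representation $A=\sum_k\delta_k^*\otimes A\delta_k$ for the upper bound on $\g(A)$, and comparison against an arbitrary nuclear representation for the lower bound. The one minor presentational difference is that for the second inequality you work column-by-column and invoke Tonelli to swap the sums, whereas the paper packages the same computation as the estimate $\norm{x^*\otimes y}_{p,1}\le\norm{x}_1\norm{y}_p$ together with countable subadditivity of $\norm{\cdot}_{p,1}$; your version is a bit more explicit about the interchange but is the same argument in substance.
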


\begin{proof}
    Without loss of generality, we assume all the quantities of interest are finite. 
    
    For the first inequality, we write $A$ as the strongly convergent series
    \begin{equation}
        A = \sum_{k}\delta_k^*\otimes A\delta_k
    \end{equation}
    with $\delta_k^* \in c_0^* \simeq \ell_1$ and $A\delta_k \in \ell_p$. Consequently,
\begin{equation*}
    \g(A) \leq \sum_{k} \norm{A\delta_k}_p\norm{\delta_k}_{1}  = \norm{A}_{p,1}.
\end{equation*}

For the second inequality, let $\epsilon>0$ and let $(x_k^*) \subset c_0^*\simeq \ell_1$, $(y_k)\subset \ell_p$ with 
\begin{equation*}
    A = \sum_{k} x_k^*\otimes y_k
    \text{ and }
    \sum_k \norm{x_k}_{1}\norm{y_k}_p \leq \g(A) + \epsilon.
\end{equation*}
Note that
\begin{equation*}
    \norm{x^*\otimes y}_{p,1} = \sum_{i }\norm{x(i)y}_p \leq \norm{x}_1\norm{y}_p.
\end{equation*}
Therefore,
\begin{equation*}
    \norm{A}_{p,1} \leq \sum_{k}\norm{x_k^*\otimes y_k}_{p,1} \leq \sum_{k}\norm{x_k}_1\norm{y_k}_p \leq \g(A) + \epsilon.
\end{equation*}
\end{proof}

Note that the proof for the (finite-dimensional) special case (with $p = 1$) was given in \cite[Lemma 2.7]{BORWZ}. In particular, we record for repeated use throughout this paper that, for any matrix $A$, viewed as an operator $\ell_\infty^n \to \ell_1^n$,
\begin{equation}
    \begin{split}
        \g(A) = \g(A:\ell_\infty^n \to \ell_1^n) 
        &= \inf\set{\sum_{k = 1}^N \norm{x_k}_1\norm{y_k}_1 : A = \sum_{k = 1}^N x_k^*\otimes y_k} \\
        &= \sum_{k,l = 1}^n|A_{kl}| = \norm{A}_{1,1}.
    \end{split}
    \label{eq:gamma=oneone}
\end{equation}

\subsection{Basic estimates and continuity of $\gp$}

From here till section \ref{sect:infinite-dim problem}, we concern ourselves with finite-dimensional vector spaces and deal primarily with matrices.

Recall that, for $A \in \psd^n$,
we define
\begin{equation}
    \gp(A) := \inf \set{\sum_{k = 1}^N \norm{x_k}_{1}^2 : A = \sum_{k = 1}^N x_kx_k^*}.
    \label{eq:def:gamma+}
\end{equation}

\begin{proposition}[\cite{BORWZ}]
\label{prop:gamma+ basics}
    The functional $\gp : \psd^n \to \R$ has the following properties.
    \begin{enumerate}[(A)]
        \item  $\gp$ is sub-additive and positive-homogeneous on $\psd^n$.
        \item For $A \in \psd^n$, $\gp(A) = \min\set{\sum_{k = 1}^{n^2 }\norm{x_k}_1^2 : A = \sum_{k = 1}^{n^2} x_kx_k^*}$. In particular, $\gp$ can be achieved by a certain finite decomposition. The maximum number of terms is $n^2$ in the complex case, and $n(n+1)/2$ in the real case (that is, when $A\in\R^{n\times n})$. 
    \end{enumerate}
\end{proposition}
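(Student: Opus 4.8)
The plan is to prove the two parts separately, using the general principle that an infimum of a continuous function over the unit sphere of a finite-dimensional space is attained, combined with a Carathéodory-type dimension count.

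\textbf{Part (A): subadditivity and positive homogeneity.} Positive homogeneity is immediate: if $A = \sum_k x_k x_k^*$ is a decomposition of $A$, then $tA = \sum_k (\sqrt{t}\,x_k)(\sqrt{t}\,x_k)^*$ for $t > 0$, and $\sum_k \norm{\sqrt{t}\,x_k}_1^2 = t\sum_k \norm{x_k}_1^2$; taking infima over both sides (and noting the correspondence between decompositions of $A$ and of $tA$ is a bijection) gives $\gp(tA) = t\,\gp(A)$, and the $t=0$ case is trivial. For subadditivity, given $A, B \in \psd^n$ and $\epsilon > 0$, pick decompositions $A = \sum_k x_k x_k^*$ and $B = \sum_j y_j y_j^*$ with $\sum_k \norm{x_k}_1^2 \le \gp(A) + \epsilon$ and $\sum_j \norm{y_j}_1^2 \le \gp(B) + \epsilon$. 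Concatenating gives a decomposition of $A + B$ (which is again in $\psd^n$), so $\gp(A+B) \le \gp(A) + \gp(B) + 2\epsilon$; let $\epsilon \to 0$.

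\textbf{Part (B): the infimum is attained with at most $n^2$ (resp.\ $n(n+1)/2$) terms.} This is the substantive part. First, I would reduce the number of terms via a dimension/Carathéodory argument: the real linear span of $\{xx^* : x \in \C^n\}$ inside $\sym^n$ has dimension $n^2$ (over $\R$; it is all of the Hermitian matrices), and in the real case the span of $\{xx^* : x \in \R^n\}$ is the $n(n+1)/2$-dimensional space of real symmetric matrices. So if $A = \sum_{k=1}^N x_k x_k^*$ with $N$ larger than this dimension $d$, the rank-one terms $x_k x_k^*$ are linearly dependent, and a standard exchange argument lets us write $A$ as a nonnegative combination $\sum \lambda_k x_k x_k^*$ with at most $d$ nonzero $\lambda_k$, i.e.\ absorbing $\sqrt{\lambda_k}$ into $x_k$, a decomposition into at most $d$ terms whose cost $\sum \norm{\sqrt{\lambda_k}\,x_k}_1^2$ does not exceed the original $\sum\norm{x_k}_1^2$ — here one must check the cost is controlled, which follows because the Carathéodory reduction expresses $A$ in the convex-cone hull and the functional $x \mapsto \norm{x}_1^2$ is what we are summing; concretely, write each decomposition's cost as $\tr$ of something or track it through the linear-algebra reduction. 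Second, having fixed the number of terms at $N = d$, consider the map $(x_1, \dots, x_d) \mapsto \sum_k x_k x_k^*$ and minimize $\sum_k \norm{x_k}_1^2$ over the (closed) set of $d$-tuples mapping to $A$; since scaling lets us normalize, and the constraint set intersected with a bounded region is compact while the objective is continuous, a minimizer exists, giving $\gp(A) = \min(\dots)$.

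\textbf{Main obstacle.} The delicate point is the cost control in the Carathéodory reduction: when we pass from $N > d$ terms to $d$ terms by killing a linear dependence $\sum_k c_k x_k x_k^* = 0$ and forming $x_k x_k^* \mapsto (1 - t c_k) x_k x_k^*$, we need the resulting smaller-cost decomposition to have $\sum \norm{\cdot}_1^2$ no larger than before. Since $\sum_k \norm{x_k}_1^2$ is \emph{linear} in the weights $(1 - tc_k)$ once the directions $x_k$ are fixed (each term scales like $(1 - tc_k)\norm{x_k}_1^2$), choosing the sign of $t$ so that the linear functional $t \mapsto \sum_k (1 - tc_k)\norm{x_k}_1^2$ does not increase, and pushing $t$ until some weight hits zero, both reduces the term count and does not increase the cost; iterating down to $d$ terms finishes it. I would also note that the compactness argument for attainment is where finite-dimensionality is essential, and cite \cite{BORWZ} since the statement is attributed there.
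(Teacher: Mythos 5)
The paper does not include a proof of this proposition; it is attributed to \cite{BORWZ}, so there is no in-paper argument to compare against. Your proof is correct. Part (A) is standard and you handle it correctly. For part (B), the Carath\'eodory reduction with cost control is the natural and essentially unique approach, and you correctly identify the delicate step: the cost $\sum_k \lambda_k \norm{x_k}_1^2$ is affine in the weights $\lambda_k$ once the directions $x_k$ are fixed, so perturbing along a linear dependence $\sum_k c_k x_k x_k^* = 0$ by setting $\lambda_k(t) = 1 - tc_k$ lets you choose the sign of $t$ to make the cost non-increasing; moving $t$ until some $\lambda_k$ hits zero is always possible in at least one direction because some $c_k \neq 0$, and that direction can be chosen consistent with non-increasing cost (if the cost-derivative has a sign, the corresponding $c_k$'s must contain a strictly positive or strictly negative entry, and trivial terms with $x_k = 0$ are dropped outright). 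The dimension counts for $\sym^n(\C)$ and $\sym^n(\R)$ are right.

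One small remark on the compactness step: the phrasing ``scaling lets us normalize, and the constraint set intersected with a bounded region is compact'' is an unnecessary detour. Once you fix the number of terms at $d$, the constraint set $\set{(x_1,\dots,x_d) : \sum_k x_k x_k^* = A}$ is already bounded, because taking the trace of the constraint gives $\sum_k \norm{x_k}_2^2 = \tr(A)$, so each $\norm{x_k}_2 \le \sqrt{\tr(A)}$. It is closed by continuity of the map $(x_1,\dots,x_d)\mapsto \sum_k x_k x_k^*$, hence compact, and the objective is continuous, so a minimizer exists directly. Everything else is fine.
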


We begin with an elementary but useful estimate. 

\begin{proposition}\label{prop:ntrace}
    Let $\g(A) = \g(A:\ell_\infty^n \to \ell_1^n)$.
    Given $A \in \psd^n$, we have 
\begin{equation}
    \label{eq:2.3}
\g(A) \leq \gp(A) \leq n\tr(A) \leq n\norm{A}_{1,1} = n\g(A)
\end{equation}
\end{proposition}

\begin{proof}
    The only nontrivial inequality is $\gp(A)\leq n\tr(A)$.
    Fix $A \in \psd^n$ and let $(x_k)_{k = 1}^N$ be any factorization $A =\sum_k x_kx_k^*$. Let $1_n = (1,\cdots, 1)^t$. Since
    \begin{equation*}
        \norm{x_k}_1 = \norm{\ip{x_k, 1_n}}_1 \leq \norm{1_n}_2\norm{x_k}_2 = \sqrt{n}\norm{x_k}_2,
    \end{equation*}
    we have
    \begin{equation*}
        \gp(A) \leq \sum_k \norm{x_k}_1^2\leq n\sum_k \norm{x_k}_2^2 = n\sum_k x_k^*(x_k) = n\tr(A).
    \end{equation*}
\end{proof}

An alternative estimate is also available as follows.

\begin{proposition}\label{prop:sqrt(n)rank}
    Given $A \in \psd^n\setminus\set{0}$, we have
    \begin{equation}
        \gp(A)  \leq  \sqrt{n} \cdot \rk(A)\cdot \norm{A}_{2\to 1} \leq 
        \sqrt{n}\cdot \rank(A) \cdot \norm{A}_{1,1}.        
    \end{equation}
\end{proposition}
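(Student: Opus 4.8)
The plan is to diagonalize $A$ and bound each eigenvector contribution, then convert the sum of squared $\ell_1$-norms into the stated geometric quantities. Write $A = \sum_{j=1}^{r} \lambda_j u_j u_j^*$ with $r = \rk(A)$, $\lambda_j > 0$, and $\{u_j\}$ orthonormal. This is a valid factorization $A = \sum_j x_j x_j^*$ with $x_j = \sqrt{\lambda_j}\,u_j$, so
\begin{equation*}
    \gp(A) \leq \sum_{j=1}^{r} \norm{x_j}_1^2 = \sum_{j=1}^{r} \lambda_j \norm{u_j}_1^2 \leq \sum_{j=1}^{r} \lambda_j \brac{\sqrt{n}\norm{u_j}_2}^2 = n\sum_{j=1}^{r}\lambda_j = n\tr(A),
\end{equation*}
which recovers the previous proposition but not yet the claimed bound. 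To do better I would avoid the crude $\norm{u_j}_1 \leq \sqrt n \norm{u_j}_2$ and instead relate $\lambda_j \norm{u_j}_1^2$ to $\norm{A}_{2\to 1}$. The key observation is that $\norm{A u_j}_1 = \lambda_j \norm{u_j}_1$ since $u_j$ is a unit eigenvector, and $\norm{A u_j}_1 \leq \norm{A}_{2\to 1}\norm{u_j}_2 = \norm{A}_{2\to 1}$ because $\norm{u_j}_2 = 1$. Hence $\lambda_j \norm{u_j}_1 \leq \norm{A}_{2\to 1}$ for every $j$.

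With this in hand, I split one factor of $\norm{u_j}_1$ off:
\begin{equation*}
    \gp(A) \leq \sum_{j=1}^r \lambda_j \norm{u_j}_1^2 = \sum_{j=1}^r \brac{\lambda_j \norm{u_j}_1} \cdot \norm{u_j}_1 \leq \norm{A}_{2\to 1}\sum_{j=1}^r \norm{u_j}_1 \leq \norm{A}_{2\to 1}\sum_{j=1}^r \sqrt{n}\norm{u_j}_2 = \sqrt{n}\,r\,\norm{A}_{2\to 1},
\end{equation*}
which is the first inequality. For the second inequality, it remains to check $\norm{A}_{2\to 1} \leq \norm{A}_{1,1}$: indeed $\norm{A}_{2\to 1} \leq \norm{A}_{1\to 1} = \max_k \norm{\mathrm{Col}_k A}_1 \leq \sum_{k,l}|A_{kl}| = \norm{A}_{1,1}$ using the general inequality $\norm{A}_{r\to q}\leq \norm{A}_{\infty\to 1}$ together with $\norm{A}_{\infty\to 1} \le \norm{A}_{1,1}$ noted after \eqref{eq:gamma=oneone}, or more directly since $\norm{A}_{2\to1}\le\norm{A}_{\infty\to1}=\g(A)=\norm{A}_{1,1}$ by \eqref{eq:gamma=oneone}.

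I do not anticipate a serious obstacle here; the proposition is genuinely elementary once one spots that $\lambda_j\norm{u_j}_1 = \norm{Au_j}_1 \le \norm{A}_{2\to1}$ and uses it to absorb one of the two $\ell_1$-factors. The only point requiring a little care is the bookkeeping of which factor of $\norm{u_j}_1$ receives the operator-norm bound versus the $\sqrt n$ Cauchy--Schwarz bound; choosing them as above is what produces the asymmetric estimate $\sqrt n\,\rk(A)\,\norm{A}_{2\to1}$ rather than the weaker $n\tr(A)$.
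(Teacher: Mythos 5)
Your proof is correct and is essentially the same as the paper's: both pass to the spectral decomposition, bound one factor of $\norm{u_j}_1$ by $\lambda_j\norm{u_j}_1 = \norm{Au_j}_1 \le \norm{A}_{2\to1}$ and the other by $\norm{u_j}_1 \le \sqrt{n}\norm{u_j}_2$, then sum over the $r$ nonzero eigenvalues. The closing reduction $\norm{A}_{2\to1}\le\norm{A}_{\infty\to1}=\norm{A}_{1,1}$ is also the one the paper uses.
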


\begin{proof}
    Let $A = \sum_k^{r}  x_kx_k^*$ be the spectral factorization, $r = \rank( A)$, and $Ax_k = \lambda_kx_k$ with $\lambda_k > 0$. Let $e_k = \lambda_k^{-1/2}x_k$. 
    Then
    \begin{equation*}
        \lambda_k\norm{x_k}_1 = \norm{Ax_k}_1 \leq \norm{A}_{2\to 1}\norm{x_k}_2.
    \end{equation*}
    Rearrange, we see that
    \begin{equation*}
        \norm{x_k}_1 \leq \norm{A}_{2\to 1}\lambda_k^{-1/2}\norm{e_k}_2 =  \norm{A}_{2\to 1}\lambda_k^{-1/2}.
    \end{equation*}
    By H\"older's inequality, we have $\norm{x_k}_1 \leq \sqrt{n}\norm{x_k}_2$. Therefore,
    \begin{equation*}
        \begin{split}
            \gp(A) &\leq \sum_{k = 1}^r \norm{x_k}_1^2\leq \sqrt{n}\norm{A}_{2\to 1}\sum_{k = 1}^{r}\norm{e_k}_2^2 = \sqrt{n}\norm{A}_{2\to 1}\tr\brac{\sum_{k=1}^re_ke_k^*} \\
            &= \mathrm{rk}(A)\cdot \sqrt{n}\cdot \norm{A}_{2\to 1}\leq \mathrm{rk}(A)\cdot \sqrt{n}\cdot \norm{A}_{1,1}.
        \end{split}
    \end{equation*}
\end{proof}

\begin{corollary}\label{cor:combined estimate}
    Given $A \in \psd^n\setminus \set{0}$, we have
    \[
    \gp(A) \leq  \sqrt{n}\cdot \min\set{\rk(A), \sqrt{n}} \cdot \norm{A}_{1,1}.
    \]
\end{corollary}
\begin{proof}
    The estimate follows directly from Propositions \ref{prop:ntrace} and \ref{prop:sqrt(n)rank}.
\end{proof}

\begin{theorem}\label{thm:continuity}
    $\gp$ is continuous on $(\psd^n,\norm{\cdot}_{1,1})$.
\end{theorem}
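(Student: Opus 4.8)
The plan is to prove continuity at an arbitrary $A\in\psd^n$ by establishing the two one‑sided estimates $\gp(A)\le\liminf_{B\to A}\gp(B)$ and $\limsup_{B\to A}\gp(B)\le\gp(A)$; the choice of $\norm{\cdot}_{1,1}$ is immaterial since all norms on $\sym^n$ are equivalent. The first remark is that $\gp$ is convex — being subadditive and positively homogeneous by Proposition \ref{prop:gamma+ basics}(A) — and finite by Proposition \ref{prop:ntrace}, hence automatically continuous on the interior of $\psd^n$, i.e. at positive‑definite $A$. So the real content is continuity at singular $A$.

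For the lower bound I would use Proposition \ref{prop:gamma+ basics}(B): $\gp(B)$ is attained by a decomposition with at most $n^2$ terms. Given $B_j\to A$ realizing the $\liminf$, pick $B_j=\sum_{k=1}^{n^2}x_k^{(j)}(x_k^{(j)})^*$ with $\sum_k\norm{x_k^{(j)}}_1^2=\gp(B_j)$; since $\sum_k\norm{x_k^{(j)}}_2^2=\tr(B_j)$ is bounded, a subsequence of $(x_1^{(j)},\dots,x_{n^2}^{(j)})$ converges in $(\Cn)^{n^2}$ to some $(x_1,\dots,x_{n^2})$ with $\sum_k x_kx_k^*=A$, so $\gp(A)\le\sum_k\norm{x_k}_1^2=\lim_j\gp(B_j)$.

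The upper bound at singular $A$ is the crux. Set $r=\rk(A)$, $R=\mathrm{range}(A)$, let $\Pi$ be the orthogonal projection onto $R$, and note that for $B$ near $A$ the compression $\Pi B\Pi$ is, on $R$, positive definite, since it converges to $A$ restricted to $R$. I would apply the Schur‑complement decomposition of $B$ with respect to the $R$‑block: it writes $B=B'+B''$ with $B',B''\succeq 0$, where $B''$ is supported on $R^\perp$ (it is the Schur complement, so $B''\to 0$ as $B\to A$) and $B'=L(\Pi B\Pi)L^*$ for a map $L=I+\tilde T$ with $\tilde T$ sending $R$ into $R^\perp$, $\norm{\tilde T}_{2\to 2}\to 0$. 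Then $\gp(B'')\le n\norm{B''}_{1,1}\to 0$ by Proposition \ref{prop:ntrace}; and pushing an optimal decomposition $\Pi B\Pi=\sum_k\xi_k\xi_k^*$ through $L$ gives $B'=\sum_k(L\xi_k)(L\xi_k)^*$, whence, using $\norm{L\xi_k}_1\le\norm{\xi_k}_1+\norm{\tilde T\xi_k}_1\le(1+\sqrt n\,\norm{\tilde T}_{2\to 2})\norm{\xi_k}_1$ (from $\norm{\cdot}_1\le\sqrt n\,\norm{\cdot}_2$ and $\norm{\cdot}_2\le\norm{\cdot}_1$),
\[
\gp(B')\le\bigl(1+\sqrt n\,\norm{\tilde T}_{2\to 2}\bigr)^2\,\gp(\Pi B\Pi).
\]
Finally $\Pi B\Pi$ lies in the face $\mathcal F=\{M\in\psd^n:\Pi M\Pi=M\}$ of $\psd^n$, on which $\gp$ is again finite and convex and in whose relative interior $A$ sits (being positive definite on $R$); hence $\gp(\Pi B\Pi)\to\gp(A)$. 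Letting $B\to A$ combines the three facts into $\limsup\gp(B)\le\gp(A)$.

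The step I expect to be the main obstacle is this last one: ruling out, at singular $A$, the "parabolic" upward jump a finite convex function can display at a non‑polyhedral boundary point. What excludes it is quantitative — the near‑identity map $L$ produced by the Schur complement distorts $\ell_1$‑norms only by $1+O(\norm{\tilde T}_{2\to 2})\to 1$, which lets one compare $\gp(B)$ with $\gp$ of a compression that lives in a lower‑dimensional face where continuity is already known. Making the Schur‑complement bookkeeping and the norm‑distortion estimate line up is where the care goes; the rest is soft.
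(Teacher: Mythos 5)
Your proposal is correct, and it takes a genuinely different route from the paper's. The paper proves both one-sided inequalities via a bespoke Lemma~\ref{lem:continuity}: part (A) produces $A-(1-\epsilon)P_{A,r}B P_{A,r}\in\psd^n$ (used for $\gp(A)\le\liminf\gp(B_j)$), and part (B) produces $B-(1-\epsilon)P_{B,r}A P_{B,r}\in\psd^n$ (used for $\limsup\gp(B_j)\le\gp(A)$), each time invoking subadditivity, the bound $\gp\le n\tr$, and an $\epsilon\to 0$ limit. Your lower bound is simpler and cleaner: by Proposition~\ref{prop:gamma+ basics}(B) the optimal decompositions have uniformly bounded length, $\sum_k\norm{x_k^{(j)}}_2^2=\tr(B_j)$ gives compactness, and passing to a convergent subsequence immediately yields a decomposition of $A$ and hence lower semicontinuity, with no need for the lemma. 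Your upper bound replaces the paper's projection onto the top-$r$ eigenspace of $B$ by a Schur-complement factorization $B=L(\Pi B\Pi)L^*+B''$ along the fixed range $R$ of $A$, controls the $\ell_1$-distortion of $L=I+\tilde T$ by $1+\sqrt n\,\norm{\tilde T}_{2\to 2}\to 1$, kills $\gp(B'')\le n\norm{B''}_{1,1}\to 0$, and then invokes the standard convex-analysis fact that a finite convex function (here $\gp$ restricted to the face $\mathcal F=\{M\in\psd^n:\Pi M\Pi=M\}$) is continuous on the relative interior of its domain, where $A$ sits. Note that this requires (as you implicitly use) that $\Pi B\Pi$ eventually lies in $\mathrm{relint}(\mathcal F)$, which holds since $\Pi B\Pi|_R\to A|_R\succ 0$. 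What your approach buys is modularity: lower semicontinuity from pure compactness, and reduction of the boundary case to known interior continuity on a face; what the paper's approach buys is self-containment, proving everything directly from the one geometric lemma without appealing to the Rockafellar-type continuity theorem. Both are valid.
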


\begin{remark}
    The continuity of $\gp$ on the relative interior of $\psd^n$ (the set of strictly positive definite matrices) was shown in \cite{BORWZ}. However, this does not directly imply the continuity on $\psd^n$. The paper \cite{BORWZ} establishes additionally that $\gp$ is locally Lipschitz on this relative interior. We do not know if this remains true on the entire $\psd^n$. 
\end{remark}

Since $\sym^n$ is a finite-dimensional vector space, it suffices to prove continuity with respect to $\norm{\cdot}_{2\to 2}$. The proof of Theorem \ref{thm:continuity} depends on the following lemma.

\begin{lemma}\label{lem:continuity}
    Let $A \in \psd^n$ be of rank $r > 0$. Let $\lambda_r > 0$ denote the $r$-th eigenvalue of $A$.
    \begin{enumerate}[(A)]
        \item Let $P_{A,r}$ denote the orthogonal projection onto the range or $A$. For any $\epsilon \in (0,1)$ and $B \in \psd^n$ with $\norm{A-B}_{2\to 2} \leq \frac{\epsilon}{1-\epsilon}\lambda_r$, we have
        \begin{equation*}
        \eqindent
            A - (1-\epsilon)P_{A,r}BP_{A,r} \in \psd^n.
        \end{equation*}
        \item For any $\epsilon \in (0,\frac{1}{2})$ and $B \in \psd^n$ with $\norm{A-B}_{2\to 2} \leq \epsilon\lambda_r$, we have
        \begin{equation*}
            \eqindent
            B - (1-\epsilon)P_{B,r}AP_{B,r}\in \psd^n,
        \end{equation*}
        where $P_{B,r}$ is the orthogonal project onto the top $r$ eigenspace of $B$.
    \end{enumerate}
\end{lemma}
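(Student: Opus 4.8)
The plan is to verify each of the two cone memberships directly, by evaluating the relevant quadratic form at an arbitrary vector, splitting that vector along the pertinent $r$-dimensional subspace and its orthogonal complement, and reducing everything to a one-variable inequality in $\epsilon$ and $\norm{A-B}_{2\to 2}$. Throughout I will use that for self-adjoint $M$ one has $\abs{\jbrac{Mw,w}}\leq \norm{M}_{2\to 2}\norm{w}_2^2$.

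For part (A), order the eigenvalues of $A$ as $\lambda_1 \geq \cdots \geq \lambda_r > 0 = \lambda_{r+1} = \cdots = \lambda_n$, so that $P_{A,r}$ is the orthogonal projection onto $\mathrm{range}(A)$ and $\jbrac{Aw,w} \geq \lambda_r\norm{w}_2^2$ for every $w \in \mathrm{range}(A)$. Given $v$, write $v = v_0 + v_1$ with $v_0 = P_{A,r}v \in \mathrm{range}(A)$ and $v_1 \in \ker A = \mathrm{range}(A)^\perp$. Then $\jbrac{Av,v} = \jbrac{Av_0,v_0}$, while (using self-adjointness of $P_{A,r}$ and of $B-A$) $\jbrac{P_{A,r}BP_{A,r}v,v} = \jbrac{Bv_0,v_0} \leq \jbrac{Av_0,v_0} + \norm{A-B}_{2\to 2}\norm{v_0}_2^2$. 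Combining these,
\[
  \jbrac{\brac{A - (1-\epsilon)P_{A,r}BP_{A,r}}v,\, v}
  \;\geq\; \epsilon\jbrac{Av_0,v_0} - (1-\epsilon)\norm{A-B}_{2\to 2}\norm{v_0}_2^2
  \;\geq\; \brac{\epsilon\lambda_r - (1-\epsilon)\norm{A-B}_{2\to 2}}\norm{v_0}_2^2,
\]
which is $\geq 0$ exactly when $\norm{A-B}_{2\to 2} \leq \frac{\epsilon}{1-\epsilon}\lambda_r$, i.e.\ under the hypothesis.

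For part (B) the argument is symmetric, but $P_{B,r}$ depends on the perturbed matrix, so I would first invoke Weyl's inequality: writing $\mu_1 \geq \cdots \geq \mu_n \geq 0$ for the eigenvalues of $B$, one has $\abs{\mu_k - \lambda_k} \leq \norm{A-B}_{2\to 2} \leq \epsilon\lambda_r$ for all $k$. Since $\epsilon < \tfrac12$, this gives $\mu_r \geq (1-\epsilon)\lambda_r > \tfrac12\lambda_r > \epsilon\lambda_r \geq \mu_{r+1}$, so $B$ has a spectral gap at index $r$ and $P_{B,r}$, the projection onto the top $r$ eigenspace, is well-defined. Now split $v = v_0 + v_1$ with $v_0 = P_{B,r}v$; since $v_0,v_1$ lie in complementary $B$-invariant subspaces, $\jbrac{Bv,v} = \jbrac{Bv_0,v_0} + \jbrac{Bv_1,v_1} \geq \jbrac{Bv_0,v_0} \geq \mu_r\norm{v_0}_2^2 \geq (1-\epsilon)\lambda_r\norm{v_0}_2^2$, while $\jbrac{P_{B,r}AP_{B,r}v,v} = \jbrac{Av_0,v_0} \leq \jbrac{Bv_0,v_0} + \epsilon\lambda_r\norm{v_0}_2^2$. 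Hence
\[
  \jbrac{\brac{B - (1-\epsilon)P_{B,r}AP_{B,r}}v,\,v}
  \;\geq\; \epsilon\jbrac{Bv_0,v_0} - (1-\epsilon)\epsilon\lambda_r\norm{v_0}_2^2
  \;\geq\; 0 .
\]

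The one place that needs care is the well-definedness of $P_{B,r}$ in part (B): the statement tacitly presumes a gap between the $r$-th and $(r+1)$-th eigenvalues of $B$, and manufacturing that gap is precisely the role of the restriction $\epsilon < \tfrac12$ via Weyl's inequality (whereas in part (A) the projection is built from the fixed matrix $A$, so no such issue arises and $\epsilon \in (0,1)$ suffices). Everything else is routine: the orthogonal decomposition bookkeeping, the operator-norm bound on quadratic forms, and the lower bound $\lambda_r$ for $A$ on its range.
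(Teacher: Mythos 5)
Your proof is correct and follows essentially the same route as the paper's: in both arguments one evaluates the quadratic form, uses $AP_{A,r}=P_{A,r}A=A$ (resp.\ $BP_{B,r}=P_{B,r}B$) to reduce to vectors in the range of the projection, and concludes from the scalar inequalities $\jbrac{Av_0,v_0}\geq\lambda_r\norm{v_0}_2^2$ for part (A) and $\mu_r\geq(1-\epsilon)\lambda_r$ (Weyl) for part (B); the paper's introduction of $C=B-P_{B,r}BP_{B,r}\succ 0$ is exactly your $\jbrac{Bv_1,v_1}\geq 0$ term. Your additional remark that $\epsilon<\tfrac12$ is what guarantees $\mu_r\geq(1-\epsilon)\lambda_r>\epsilon\lambda_r\geq\mu_{r+1}$, so that $P_{B,r}$ is unambiguously defined, is a point the paper leaves implicit and is worth noting.
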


\begin{proof}
    For (A), write $P = P_{A,r}$, let $\Delta = A - (1-\epsilon)PBP$, and let $z \in \ell_2^n$. We have
    \begin{equation*}
        \begin{split}
            \jbrac{\Delta x, x} 
            &= \jbrac{APx, Px} - (1-\epsilon)\jbrac{BPx, Px}\\
            &= \jbrac{ \left[A - (1-\epsilon)B\right]Px, Px }\\
            &= (1-\epsilon)\jbrac{(A-B)Px, Px} + \epsilon\jbrac{APx, Px} \\
            &\geq \epsilon \lambda_r\norm{Px}_2^2 - (1-\epsilon)\norm{A-B}_{2\to 2}\norm{Px}_2^2\\
            \brac{\norm{A-B}_{2\to 2} \leq \frac{\epsilon}{1-\epsilon}\lambda_r} 
            \quad &\geq 0. 
        \end{split}
    \end{equation*}

    For (B), write $P = P_{B,r}$, let $\Delta = B - (1-\epsilon)PAP$, and let $C = B - PBP \in \psd^n$. Let $\mu$ be the $r$-th eigenvalue of $B$. Note that
    \begin{equation*}
        \abs{\mu - \lambda_r} \leq \norm{A-B}_{2\to 2} \leq \epsilon\lambda_r \Longrightarrow
        \mu \geq (1-\epsilon)\lambda_r.
    \end{equation*}
    For any $x \in \ell_2^n$,
    \begin{equation*}
        \begin{split}
            \jbrac{\Delta x, x}
            &= \jbrac{Cx, x} + 
            \jbrac{BPx,Px}
            - (1-\epsilon)\jbrac{APx, Px}
            \\
            &= \jbrac{Cx,x} + \epsilon\jbrac{BPx,Px} + (1-\epsilon)\jbrac{(B-A)Px,Px}
            \\
            &\geq \jbrac{Cx,x} + \brac{\epsilon\mu - (1-\epsilon)\norm{A-B}_{2\to 2}}\norm{Px}_2^2
            \\
            \brac{\norm{A-B}_{2\to 2}\leq \epsilon\lambda_r \leq \frac{\epsilon}{1-\epsilon}\mu}
            &\geq 0.
        \end{split}
    \end{equation*}
\end{proof}

\begin{proof}[Proof of Theorem \ref{thm:continuity}]
    Let $B_j \to A$ in $\psd^n$ (with respect to $\norm{\cdot}_{2 \to 2}$). We want to show $\gp(B_j) \to \gp(A)$.

    Suppose $A = 0$, then $B_j \to 0$. By Proposition \ref{prop:ntrace} (or \ref{prop:sqrt(n)rank}),
    \begin{equation*}
        0 \leq \gp(B_j) \leq n\tr(B_j) \to 0 \text{ as } n\to \infty.
    \end{equation*}
    
    Suppose $A \neq 0$. Let $A = \sum_k^{N} x_kx_k^*$ be an optimal decomposition for $\gp$. Let $\epsilon \in (0,1/2)$ be arbitrary. Let $J = J(\epsilon)$ be so large such that
    \begin{equation*}
        \norm{A-B_j}_{1,1} \leq \epsilon\lambda_r \quad \forall j  > J.
    \end{equation*}
    Let $B_j = \sum_{k = 1}^N w^{(j)}_k(w^{(j)}_k)^*$ be an optimal decomposition for $\gp$. Set
    \begin{equation*}
        \Delta_j := A - (1-\epsilon)P_{A,r}B_j P_{A,r}.
    \end{equation*}
    By Lemma \ref{lem:continuity}, for any $j > J$,
    \begin{equation*}
        \gp(A) \leq (1-\epsilon)\gp(P_{A,r}B_jP_{A,r}) + \gp(\Delta_j) \leq (1-\epsilon)\sum_{k = 1}^N \norm{P_{A,r}w^{(j)}_k}_1^2 + n\cdot \tr(\Delta_j).
    \end{equation*}
    Replacing by a subsequence, so that $w^{(j_\ell)}_k \to y_k \in \ell_1^n$ for every $k$ and
    \begin{equation*}
        \lim_{\ell\to\infty}\gp(B_{j_\ell})  = \liminf_{j \to \infty}\gp(B_j).
    \end{equation*}
    We then have
    \begin{equation*}
        \lim_{\ell\to\infty} P_{A,r}w^{(j_\ell)}_k = P_{A,r}y_k = y_k
    \end{equation*}
    and
    \begin{equation*}
        \lim_{\ell \to \infty}\sum_{k = 1}^N \norm{P w^{j_\ell}_k}_1^2 
        =
        \lim_{\ell\to\infty} \sum_{k = 1}^N \norm{w^{j_\ell}_k}_1^2 = \liminf_{j\to\infty}\gp(B_j).
    \end{equation*}

    On the other hand,
    \begin{equation*}
        \lim_{j\to\infty}\tr(\Delta_j) = \epsilon\tr(A).
    \end{equation*}
    Therefore,
    \begin{equation*}
        \gp(A) \leq (1-\epsilon)\liminf_{j\to\infty}\gp(B_j) + \epsilon\cdot \tr(A).
    \end{equation*}
    Since $\epsilon > 0$ is arbitrary, we can conclude that
    \begin{equation*}
        \gp(A) \leq \liminf_{j\to\infty}\gp(B_j).
    \end{equation*}

    The proof of the reverse inequality is similar by using Lemma \ref{lem:continuity}(B), setting $\Delta_j = B_j - (1-\epsilon)P_{B_j,r}AP_{B_j,r}$, estimating 
    \begin{equation*}
        \gp(B_j) \leq (1-\epsilon)\sum_{k = 1}^N\norm{P_{B_j,r}x_k}_1^2 + n\tr(\Delta_j),
    \end{equation*}
    and taking the $\limsup$:
    \begin{equation*}
        \limsup_{j \to \infty}\gp(B_j) \leq (1-\epsilon)\gp(A) + n^2\epsilon\norm{A}_{2\to 2}.
    \end{equation*}
    
\end{proof}

\subsection{Some special cases}

We discuss some special cases where $\gp = \g$. We summarize the results of \cite[Section 3]{BORWZ} in the following proposition.

\begin{proposition}\label{prop:special-case-1}
    $\gp(A) = \norm{A}_{1,1}$ in the following three cases.
    \begin{enumerate}[\text{Case} 1.]
        \item $A \in \psd^n$ has rank one.
        \item $A \in \psd^2$, i.e., $A$ is a $2\times 2$ positive-semidefinite matrix;
        \item $A \in \psd^n$ is diagonally dominant, i.e., $A(k,k) \geq \sum_{l\neq k}\abs{A(k,l)}$ for all $k \in [n]$.
    \end{enumerate}
\end{proposition}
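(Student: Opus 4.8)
The plan is to prove each of the three cases by exhibiting an explicit decomposition $A = \sum_k x_kx_k^*$ whose cost $\sum_k \norm{x_k}_1^2$ matches $\norm{A}_{1,1}$, since by Proposition~\ref{prop:ntrace} we always have $\gp(A)\geq\g(A)=\norm{A}_{1,1}$, so only the upper bound $\gp(A)\leq\norm{A}_{1,1}$ needs work in each case.

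For \textbf{Case 1}, write $A = xx^*$ for a single vector $x$; then $\gp(A)\leq\norm{x}_1^2$, while $\norm{A}_{1,1} = \sum_{k,l}\abs{x(k)}\abs{x(l)} = \norm{x}_1^2$, so equality is immediate. For \textbf{Case 3} (diagonally dominant), I would use the standard ``Gershgorin-type'' splitting: write $A$ as a sum over pairs $\{k,l\}$ of the $2\times 2$ principal-submatrix-supported pieces $A(k,l)(\pm)(e_k \mp \mathrm{sgn}(A(k,l))e_l)(e_k \mp \mathrm{sgn}(A(k,l))e_l)^*$ scaled by $\abs{A(k,l)}$, plus a diagonal remainder $\sum_k \big(A(k,k)-\sum_{l\neq k}\abs{A(k,l)}\big)e_ke_k^*$, which is nonnegative precisely by diagonal dominance. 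Each rank-one term $\abs{A(k,l)}^{1/2}$-times-a-$\pm$-combination of two standard basis vectors has $\ell_1$-norm-squared equal to exactly $2\abs{A(k,l)}$ when using $e_k+e_l$ type vectors scaled so that the cross term reproduces $A(k,l)$; bookkeeping the contributions to the off-diagonal (each $A(k,l)$ contributes $2\abs{A(k,l)}$ split as $\abs{A(k,l)}$ to entry $(k,l)$ and $(l,k)$) and to the diagonal shows the total cost equals $\sum_{k,l}\abs{A(k,l)} = \norm{A}_{1,1}$. One should double-check the complex case (where $\mathrm{sgn}$ becomes a unimodular phase and the rank-one pieces use $e_k + \bar\omega e_l$), but the arithmetic is the same.

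\textbf{Case 2}, the $2\times 2$ case, is the one requiring the most care, and I expect it to be the main obstacle. A general $A\in\psd^2$ has the form $\begin{pmatrix} a & b \\ \bar b & c\end{pmatrix}$ with $a,c\geq 0$ and $\abs{b}^2\leq ac$; here $\norm{A}_{1,1} = a+c+2\abs{b}$. If $\abs{b}^2 = ac$ the matrix is rank one and Case~1 applies, so assume $\abs{b}^2 < ac$. I would try the two-term decomposition $A = x_1x_1^* + x_2x_2^*$ with $x_1$ a multiple of $(1, \bar b/\abs{b})$ (or $(1,0)$ if $b=0$) capturing all of $b$, and $x_2$ a suitable multiple of $e_1$ or $e_2$ absorbing the leftover diagonal; more robustly, parametrize $x_1 = (\sqrt{s},\, \bar\omega\sqrt{t})$ where $\omega = b/\abs{b}$, forcing $\sqrt{st}\cdot\omega\cdot(\text{phase}) = b$, i.e. $\sqrt{st} = \abs{b}$, and then $x_1x_1^*$ has diagonal $(s,t)$ with $st = \abs{b}^2\leq ac$; choose $s\leq a$, $t\leq c$ (possible since $s t = \abs b^2 \le ac$, e.g. $s = \abs b\sqrt{a/c}$, $t=\abs b\sqrt{c/a}$), leaving $A - x_1x_1^* = \mathrm{diag}(a-s, c-t)\succeq 0$, which contributes $(a-s)+(c-t)$ to the cost; the total is $(\sqrt s+\sqrt t)^2 + (a-s)+(c-t) = a+c+2\sqrt{st} = a+c+2\abs b = \norm{A}_{1,1}$. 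That closes Case~2. Throughout I would cite Proposition~\ref{prop:ntrace} for the trivial direction and Proposition~\ref{prop:nuclear-vs-entry}/equation~\eqref{eq:gamma=oneone} for the identification $\g = \norm{\cdot}_{1,1}$, noting that these results are quoted from \cite{BORWZ} so a brief self-contained argument suffices.
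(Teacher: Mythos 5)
Your proof is correct on all three cases, and I'll note up front that the paper itself supplies no proof for this proposition: it cites \cite{BORWZ}, Section 3. So there is no in-paper argument to compare against; what you have written is a self-contained substitute for that citation.

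Checking the details: Case 1 is immediate since $\abs{A(k,l)}=\abs{x(k)}\,\abs{x(l)}$ gives $\norm{A}_{1,1}=\norm{x}_1^2$, and the trivial direction $\gp\geq \g = \norm{\cdot}_{1,1}$ is available from Proposition~\ref{prop:ntrace} and \eqref{eq:gamma=oneone}. Case 2 is handled cleanly: with $s=\abs b\sqrt{a/c}$, $t=\abs b\sqrt{c/a}$ one has $st=\abs b^2$, $s\leq a$, $t\leq c$, the cross term of $x_1x_1^*$ equals $b$ exactly, the diagonal remainder $\mathrm{diag}(a-s,c-t)$ is PSD, and the cost sums to $a+c+2\abs b=\norm{A}_{1,1}$; the degenerate subcases ($a=0$, $c=0$, $b=0$) all reduce to diagonal matrices and cause no trouble. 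Case 3 works by the Gershgorin-type splitting as described: taking $v_{kl}=\sqrt{\abs{A(k,l)}}(e_k+\bar\omega_{kl}e_l)$ over unordered pairs $\{k,l\}$ (with $\omega_{kl}=A(k,l)/\abs{A(k,l)}$), plus the diagonal remainder $\sum_k\bigl(A(k,k)-\sum_{l\neq k}\abs{A(k,l)}\bigr)e_ke_k^*$, which is PSD precisely by diagonal dominance. One small bookkeeping slip in your write-up: each $v_{kl}$ has $\norm{v_{kl}}_1^2=4\abs{A(k,l)}$ per unordered pair (equivalently $2\abs{A(k,l)}$ per \emph{ordered} pair), not $2\abs{A(k,l)}$ per unordered pair; your description conflates these. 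The total nevertheless tallies correctly to $\tr(A)+\sum_{k\neq l}\abs{A(k,l)}=\norm{A}_{1,1}$, so the conclusion stands. Your construction in Case 3 is also worth comparing to Proposition~\ref{prop:special-case-CP}: diagonally dominant PSD matrices need not be completely positive (the $v_{kl}$ have entries of mixed sign when $A(k,l)<0$), yet the same equality $\gp=\norm{\cdot}_{1,1}$ holds by essentially the same explicit-decomposition strategy.
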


Recall that a matrix $A \in \psd^n(\R)$ is {\em completely positive} if there exists $X \in \R_+^{n\times N}$ such that $A = XX^*$, i.e., $A$ admits a symmetric nonnegative matrix factorization (NMF). 

\begin{proposition}\label{prop:special-case-CP}
    Suppose $A \in \psd^n(\R)$ is completely positive. Then $\gp(A) = \norm{A}_{1,1}$.
\end{proposition}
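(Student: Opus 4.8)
The plan is to reduce to the rank-one case (Case 1 of Proposition~\ref{prop:special-case-1}) by exploiting the nonnegativity of the factorization together with the nonnegativity it forces on the entries of $A$. Suppose $A = XX^*$ with $X \in \R_+^{n\times N}$, and write $x_1,\dots,x_N$ for the columns of $X$, so that $A = \sum_{k=1}^N x_k x_k^*$ with each $x_k \in \R_+^n$. We always have $\gp(A) \leq \norm{A}_{1,1} = \g(A)$ for $A \in \psd^n$: indeed this is exactly \eqref{eq:gamma=oneone} combined with Proposition~\ref{prop:ntrace} giving $\gp(A)\le n\tr(A)$ is the wrong direction, so instead I recall that $\gp(A)\le \norm{A}_{1,1}$ already follows from Case~1 and subadditivity, as in \cite{BORWZ}; but here it is cleanest to prove $\gp(A)\le\norm{A}_{1,1}$ directly from the completely positive decomposition, and the reverse bound $\gp(A)\ge\g(A)=\norm{A}_{1,1}$ is Proposition~\ref{prop:ntrace}. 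So the only thing to establish is $\gp(A)\le\norm{A}_{1,1}$.

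For the upper bound, first observe that because every $x_k$ has nonnegative entries, for each pair of indices $i,j$ we have $A(i,j) = \sum_k x_k(i)x_k(j) \geq 0$; hence $\norm{A}_{1,1} = \sum_{i,j} A(i,j) = \sum_{i,j}\sum_k x_k(i)x_k(j) = \sum_k \brac{\sum_i x_k(i)}\brac{\sum_j x_k(j)} = \sum_k \norm{x_k}_1^2$, where in the last step I used again that $x_k \geq 0$ so $\sum_i x_k(i) = \norm{x_k}_1$. This computation shows the particular completely positive factorization $A = \sum_k x_k x_k^*$ already achieves $\sum_k\norm{x_k}_1^2 = \norm{A}_{1,1}$, so by the definition \eqref{eq:def:gamma+} of $\gp$ as an infimum we get $\gp(A) \leq \norm{A}_{1,1}$. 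Combining with $\gp(A)\ge\g(A) = \norm{A}_{1,1}$ from Proposition~\ref{prop:ntrace} yields $\gp(A) = \norm{A}_{1,1}$.

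I expect essentially no obstacle here: the whole argument hinges on the single identity that for a nonnegative vector $x$, $\norm{x}_1 = \jbrac{x,\mathbf 1_n}$, which turns the cross terms in $\norm{A}_{1,1}$ into a perfect product and makes the $\ell_1$-norm-squared sum telescope exactly onto $\norm{A}_{1,1}$. The one point worth stating carefully is that we are free to use \emph{any} completely positive factorization as a competitor in the infimum defining $\gp$, and that the resulting value is automatically optimal because it meets the universal lower bound $\g(A) = \norm{A}_{1,1}$. Alternatively, one can phrase the upper bound as: $A = \sum_k x_kx_k^*$ with $x_k\ge 0$, and apply Case~1 of Proposition~\ref{prop:special-case-1} to each rank-one summand together with subadditivity (Proposition~\ref{prop:gamma+ basics}(A)), since $\norm{x_kx_k^*}_{1,1} = \norm{x_k}_1^2$ when $x_k\ge 0$; this gives $\gp(A)\le\sum_k\gp(x_kx_k^*) = \sum_k\norm{x_k}_1^2 = \norm{A}_{1,1}$, the last equality again by the nonnegativity identity. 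Either route is short; I would present the direct computation.
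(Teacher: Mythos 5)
Your proof is correct and follows the same route as the paper: use the nonnegative factorization $A = XX^*$ to compute $\sum_k\norm{x_k}_1^2 = \sum_{i,j}A(i,j) = \norm{A}_{1,1}$, and combine with the universal lower bound $\gp(A)\ge\g(A)=\norm{A}_{1,1}$. The one rough spot is the opening sentence where you briefly wrote the inequality in the wrong direction before self-correcting; in a final write-up you should simply state $\g(A)\le\gp(A)$ (Proposition~\ref{prop:ntrace}) together with $\g(A)=\norm{A}_{1,1}$ (\eqref{eq:gamma=oneone}) and then give the direct computation for the upper bound.
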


\begin{proof}
    We write $\g(A)$ for $\g(A:\ell_\infty^n\to\ell_1^n)$.
    Thanks to Proposition \ref{prop:nuclear-vs-entry}, we have $\norm{\cdot}_{1,1} = \gamma(\cdot)$.
    By the definitions of $\g$ and $\gp$ in \eqref{eq:gamma=oneone} and \eqref{eq:def:gamma+}, we always have $\norm{\cdot}_{1,1} = \g \leq \gp$. It suffices to show the reverse inequality.
    
    Fix a symmetric nonnegative factorization $A = XX^*$ with $X \in \R_+^{n\times N}$. Setting $x_k := X\delta_k$ where $\set{\delta_k}_{k = 1}^n$ is the standard basis for $\Rn$, we have $A = \sum_{k = 1}^N x_kx_k^*$. 
    Then
    \begin{equation*}
        \begin{split}
            \gp(A) &\leq \sum_{k=1}^N \norm{x_k}_1^2 = \sum_{k = 1}^N \brac{\sum_{i = 1}^n x_k(i)}^2 
            =  \sum_{i,j = 1}^n\sum_{k = 1}^N x_k(i)x_k(j) = \sum_{i,j = 1}^n A(i,j) = \norm{A}_{1,1} = \g(A).
        \end{split}
    \end{equation*}
\end{proof}

It is known that for $n \leq 4$, if $A \in \psd^n(\R)\cap \R_+^{n\times n}$ then $A$ is completely positive. It was shown by Hall and Newman \cite{HallNewman1963-completelypositive} that this phenomenon fails for $n \geq 5$ . We remark that Proposition \ref{prop:special-case-CP} is related to a special case of
Theorem \ref{thm:2-summing factorization, sufficient Type A} in a later section.

\section{Duality}
\label{sect:duality}
\renewcommand{\M}{\mathcal{B}}

Let $\Sone$ denote the unit sphere of $\ell_1^n$. Let $\M_+(\Sone)$ denote the cone of positive Borel measures on $\Sone$.

\begin{theorem}\label{thm:gamma+superresolution}
    For any $A \in \psd^n(\mathbb C)$,
    \begin{equation}
        \gp(A) = \inf\set{ \int_{\Sone}d\mu : A = \int_{\Sone}xx^*d\mu,\, \mu \in \M_+(\Sone) }
        .
        \label{eq:def:gamma+ measure}
    \end{equation}
    The infimum can be replaced by a minimum. 
\end{theorem}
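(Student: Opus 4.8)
The plan is to prove Theorem \ref{thm:gamma+superresolution} by showing the two inequalities between $\gp(A)$ and the infimum on the right-hand side, which I will call $\gp^{\mathrm{meas}}(A)$, and then arguing that the infimum is attained.

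First I would show $\gp^{\mathrm{meas}}(A) \le \gp(A)$. This direction is immediate from the definition: if $A = \sum_{k=1}^N x_k x_k^*$ is any finite rank-one decomposition, write $x_k = \norm{x_k}_1 u_k$ with $u_k \in \Sone$ (discarding any zero terms), and take $\mu = \sum_{k=1}^N \norm{x_k}_1^2 \, \delta_{u_k}$, the weighted sum of point masses. Then $\int_{\Sone} xx^* \, d\mu = \sum_k \norm{x_k}_1^2 u_k u_k^* = \sum_k x_k x_k^* = A$ and $\int_{\Sone} d\mu = \sum_k \norm{x_k}_1^2$. Taking the infimum over decompositions gives the inequality. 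Note this also already shows that discrete measures suffice to approximate, which will help with attainment.

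Next, the reverse inequality $\gp(A) \le \gp^{\mathrm{meas}}(A)$. Given a feasible $\mu \in \M_+(\Sone)$ with $A = \int_{\Sone} xx^* \, d\mu$, I need to extract a finite decomposition with $\sum_k \norm{x_k}_1^2$ close to $\int d\mu$. The clean way is a Carathéodory/extreme-point argument: the map $\mu \mapsto (\int xx^*\, d\mu, \int d\mu) \in \psd^n \times \R$ is linear, and the set of attainable pairs from probability measures on the compact set $\Sone$ is a compact convex set whose extreme points correspond to point masses $\delta_u$, i.e. to pairs $(uu^*, 1)$. Scaling, every feasible $\mu$ with total mass $m$ produces a pair $(A, m)$ which, by Carathéodory's theorem in the $\dim(\sym^n) + 1$-dimensional space $\sym^n \times \R$, is a convex combination of at most $\dim(\sym^n)+1 = n(n+1)/2 + 1$ (real case) such extreme pairs; rescaling back yields $A = \sum_k c_k u_k u_k^*$ with $c_k \ge 0$, $\sum_k c_k = m$, hence $A = \sum_k x_k x_k^*$ with $x_k = \sqrt{c_k}\, u_k$ and $\sum_k \norm{x_k}_1^2 = \sum_k c_k \norm{u_k}_1^2 = \sum_k c_k = m = \int d\mu$ since $u_k \in \Sone$. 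Taking the infimum over $\mu$ gives $\gp(A) \le \gp^{\mathrm{meas}}(A)$, and in fact shows the two quantities are equal term by term.

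Finally, for attainment: take a minimizing sequence $\mu_j \in \M_+(\Sone)$ with $\int d\mu_j \to \gp^{\mathrm{meas}}(A)$ and $\int xx^*\, d\mu_j = A$. The total masses $\int d\mu_j$ are bounded, so $(\mu_j)$ lies in a bounded subset of $\M_+(\Sone) = C(\Sone)^*$ (here $\Sone$ is compact), hence has a weak-$*$ convergent subsequence $\mu_{j_\ell} \rightharpoonup \mu_\infty \in \M_+(\Sone)$. Since $x \mapsto \jbrac{xx^* e_p, e_q}$ and $x \mapsto 1$ are continuous on $\Sone$, weak-$*$ convergence gives $\int xx^* \, d\mu_\infty = A$ and $\int d\mu_\infty = \lim_\ell \int d\mu_{j_\ell} = \gp^{\mathrm{meas}}(A)$, so $\mu_\infty$ is a minimizer. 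The main obstacle is getting the Carathéodory reduction exactly right — in particular being careful that one works with $\sym^n \times \R$ and that the extreme points of the relevant convex set are precisely the rescaled point masses — but this is standard; alternatively one could invoke the equality $\gp^{\mathrm{meas}}(A) = \gp(A)$ together with Proposition \ref{prop:gamma+ basics}(B) (which already gives attainment of $\gp$ by a finite decomposition) and then convert that finite decomposition back into a finitely-supported measure as in the first paragraph, which attains the infimum in \eqref{eq:def:gamma+ measure}.
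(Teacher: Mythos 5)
Your proof is correct, and the harder direction $\gp(A)\le\gp^{\mathrm{meas}}(A)$ takes a genuinely different route from the paper. The paper first secures an optimal measure $\mu^*$ via Banach--Alaoglu, covers $\Sone$ by a partition of diameter $\epsilon$, collapses $\mu^*$ onto each cell's barycenter, and controls the positive-semidefinite residual $R$ by $\gp(R)\le n\tr(R)$ from Proposition \ref{prop:ntrace}, yielding $\gp(A)\le(1+O(\epsilon))\mu^*(\Sone)$ and then letting $\epsilon\to 0$. You instead perform a Carath\'eodory (Tchakaloff-type) reduction: with $m=\mu(\Sone)$, the pair $(A/m,1)$ is the barycenter of a probability measure pushed onto the compact set $S=\{(uu^*,1):u\in\Sone\}\subset\sym^n\times\R$, hence lies in $\mathrm{conv}(S)$, and Carath\'eodory gives a finitely supported representation achieving $\sum_k\|x_k\|_1^2=m$ \emph{exactly}, with no $\epsilon$-loss and no appeal to Proposition \ref{prop:ntrace} or the subadditivity of $\gp$. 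This is cleaner and, as a bonus, re-derives a Carath\'eodory bound on the number of terms. Two minor points: you should record the (standard, separating-hyperplane) fact that the barycenter of a probability measure on a compact set lies in its convex hull, and the aside about ``extreme points'' is slightly off (not every $(uu^*,1)$ is an extreme point of $\mathrm{conv}(S)$) but also unnecessary --- Carath\'eodory uses points of $S$, not extreme points. For attainment, the paper takes your alternative route via Proposition \ref{prop:gamma+ basics}(B) rather than the weak-$*$ compactness argument you give first; both are fine, and the paper in fact also invokes weak-$*$ compactness earlier in the reverse inequality to justify working with an optimal $\mu^*$.
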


\begin{proof}
    Let $p^*:=  \inf\set{ \int_{\Sone}d\mu : A = \int_{\Sone}xx^*d\mu,\, \mu \in \M_+(\Sone) }$. Assume $A = \sum_{k = 1}^N x_kx_k^*$ is an optimal decomposition for $\gp$ (in the sense of \eqref{eq:def:gamma+}), the existence of which is guaranteed by Proposition \ref{prop:gamma+ basics}(B). Then 
    \begin{equation*}
        \mu(x) := \sum_{k = 1}^N\norm{x_k}_1^2 \delta(x-\tfrac{x_k}{\norm{x_k}_1}) \in \M_+(\Sone)
    \end{equation*}
    satisfies $A = \int_{\Sone}xx^*d\mu$, so $p^*\leq \gp(A)$.

    For the reverse inequality, we first note that the infimum is achieved, thanks to the Banach-Alaoglu Theorem. Let $\mu^*$ be an optimal measure achieving $p^*$. Fix $\epsilon > 0$. Let $(U_k)_{1 \leq k \leq \kmax}$ be a sufficiently fine disjoint partition of $\Sone$ such that there exist $z_1, \cdots, z_{\kmax} \in \Sone$
    \begin{equation*}
        U_k \subset B_\epsilon(z_k)\cap \Sone
        \text{ for all } k. 
    \end{equation*}
    For each $k = 1, \cdots, \kmax$, define
    \begin{align*}
         x_k &:= \frac{1}{\mu(U_k)}\int_{U_k}xd\mu^*(x) \in B_\epsilon(z_k),\\
         g_k &:= \sqrt{\mu^*(U_k)}x_k, \text{ and }\\
         R_k &:= \int_{U_k}(x-x_k)(x-x_k)^*d\mu^*(x) = \int_{U_k}xx^*d\mu^*(x) - \mu^*(U_k)x_kx_k^* \geq 0.
    \end{align*}
    Summing over $k$, with $R = \sum_{k}^{\kmax}R_k$, we have
    \begin{equation}
        A = \int_{\Sone}xx^*d\mu^*(x) \leq \sum_{k = 1}^{\kmax} g_kg_k^* + R.
        \label{eq:thm:gamma+superresolution-1}
    \end{equation}
    Applying Proposition \ref{prop:gamma+ basics}(A) and \ref{prop:ntrace} to \eqref{eq:thm:gamma+superresolution-1}, we see that
    \begin{equation}
        \gp(A) \leq \sum_{k = 1}^{\kmax}\norm{g_k}_1^2 + \gp(R) \leq 
        \sum_{k = 1}^{\kmax} \mu^*(U_k)\norm{x_k}_1^2 + n\tr(R).
        \label{eq:thm:gamma+superresolution-2}
    \end{equation}
    Since $\norm{x_k - z_k}_1 \leq \epsilon$ and $\norm{x - x_k}_1 \leq 2\epsilon$ for every $k$ and $x \in U_k$, we have
    \begin{equation}
        \norm{x_k}_1 \leq 1 + \epsilon
        \text{ and }
        \tr(R_k) \leq 4\mu^*(U_k)\epsilon^2.
        \label{eq:thm:gamma+superresolution-3}
    \end{equation}
    It follows from \eqref{eq:thm:gamma+superresolution-2} and \eqref{eq:thm:gamma+superresolution-3}
    that 
    \begin{equation*}
        \gp(A) \leq \mu^*(\Sone) + (2\epsilon + \epsilon^2 + 4n\epsilon^2)\mu^*(\Sone).
    \end{equation*}
    Since $\epsilon > 0$ is arbitrary and $n$ is fixed, we can conclude that $\gp(A) \leq p^*$.

    Thanks to Proposition \ref{prop:gamma+ basics}(B), the infimum can be achieved. 
\end{proof}

\subsection{Strong duality for $\gp$}

Recall that $\M_+(\Sone)$ denotes the cone of positive Borel measures on $\Sone$, the unit sphere in $\ell_1^n$. Let $\M(\Sone)$ denote the space of signed Borel measures on $\Sone$. By the Riesz-Markov-Kakutani representation theorem,
\begin{equation*}
    C(\Sone)^* \cong \M(\Sone).
\end{equation*}
We also have the duality of the positive cones
\begin{equation*}
    C_+(\Sone)^*\cong \M_+(\Sone)
\end{equation*}
where $C_+(\Sone)$ is the cone of nonnegative continuous functions. 

Consider the dual pairs $(\M(\Sone), C(\Sone))$ and $(\sym^n, \sym^n)$ with the duality pairing, i.e., we equip $\M$ with the weak star topology, $C(\Sone)$ with the weak topology, and $\sym^n$ with the Euclidean topology.

Consider the map
\begin{equation*}
    \Phi: \M(\Sone) \to \sym^n 
    \quad,\quad \mu \mapsto \Phi(\mu) = \int_{\Sone}xx^*d\mu.
\end{equation*}
The adjoint of $\Phi$ is given by
\begin{equation*}
    \Phi^*: \sym^n \to C(\Sone),\,
    \quad T\mapsto \Phi^*(T)(x) = \tr[(x^*\otimes x)\circ T] = \jbrac{Tx,x}.
\end{equation*}
By linear duality theory, the following functional on $\psd^n$
\begin{equation*}
    \delta_+(A) := \sup\set{\tr(AT) : T \in \sym^n \text{ and } \jbrac{Tx,x} \leq 1 \,,\forall x \in \Sone }
\end{equation*}
is the dual of the (measure-theoretic) linear program associated with $\gp$.

\begin{theorem}\label{thm:strong duality}
    For all $A \in \psd^n$, $\delta_+(A) = \gp(A)$. 
\end{theorem}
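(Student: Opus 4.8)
The plan is to establish $\delta_+(A) = \gp(A)$ by proving the two inequalities separately, using weak-$*$ compactness for one direction and a separation argument for the other. For the easy inequality $\delta_+(A) \le \gp(A)$: take any optimal (or near-optimal) decomposition $A = \sum_k x_k x_k^*$ with $x_k = \norm{x_k}_1 \, u_k$, $u_k \in \Sone$, from Theorem~\ref{thm:gamma+superresolution}, and any feasible $T$ for the $\delta_+$ program, i.e. $\jbrac{Tx,x} \le 1$ for all $x \in \Sone$. Then $\tr(AT) = \sum_k \norm{x_k}_1^2 \jbrac{T u_k, u_k} \le \sum_k \norm{x_k}_1^2 = \gp(A)$. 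Taking the supremum over $T$ gives $\delta_+(A) \le \gp(A)$. (Note one must check the $\delta_+$ feasible set is nonempty, e.g. $T = 0$, so the sup is well-defined and $\ge 0$.)

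For the reverse inequality $\gp(A) \le \delta_+(A)$, I would run the standard linear-programming duality / Hahn--Banach separation argument on the pairing $(\M(\Sone), C(\Sone))$ versus $(\sym^n, \sym^n)$. Consider the convex cone
\[
  K := \set{ (\Phi(\mu), \, \mu(\Sone)) : \mu \in \M_+(\Sone) } \subset \sym^n \times \R.
\]
By Theorem~\ref{thm:gamma+superresolution}, $\gp(A) = \inf\set{ t : (A,t) \in K }$, and this infimum is attained. The point $(A, \gp(A) - \eta)$ for $\eta > 0$ lies outside $K$; if $K$ (or rather its intersection with $\sym^n \times (-\infty, \gp(A)]$, or $K$ itself shifted) is closed and convex, Hahn--Banach separation in the finite-dimensional-augmented-by-$\R$ space $\sym^n \times \R$ yields a functional, i.e. a pair $(T, s) \in \sym^n \times \R$, separating $(A, \gp(A)-\eta)$ from $K$. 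Unwinding: $\tr(T\Phi(\mu)) + s\,\mu(\Sone) \ge 0$ for all $\mu \in \M_+(\Sone)$ forces (taking $\mu = \delta_x$) $\jbrac{Tx,x} + s \ge 0$ for all $x \in \Sone$, hence (after rescaling, since $s$ must be negative for the separation to be strict against the growing-$t$ direction) $-T/|s|$ is feasible for $\delta_+$ with value $\tr(A \cdot (-T/|s|)) \ge \gp(A) - \eta$ in the limit. Letting $\eta \to 0$ gives $\delta_+(A) \ge \gp(A)$.

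The main obstacle is verifying the closedness needed for the separation to produce a \emph{non-vertical} separating hyperplane (equivalently, that there is no duality gap). The cleanest route is: the set $\set{ \mu \in \M_+(\Sone) : \mu(\Sone) \le M }$ is weak-$*$ compact by Banach--Alaoglu, and $\Phi$ is weak-$*$-to-Euclidean continuous, so its image $\set{ (\Phi(\mu),t) : \mu(\Sone) \le t \le M }$ is compact, hence closed; taking unions over $M$ and using that the relevant sublevel sets are bounded when $A$ is fixed, one gets that $K \cap (\set{A} \times \R)$-type slices behave well, and in particular the value function $t \mapsto [\,(A,t)\in K\,]$ has closed sublevel sets. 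Alternatively, and perhaps more robustly, I would invoke a general strong-duality theorem for conic linear programs (e.g. the fact that if the primal value is finite and attained and the constraint cone has nonempty interior in an appropriate sense, strong duality holds) — here the primal is the measure LP of Theorem~\ref{thm:gamma+superresolution}, already shown to have an attained finite optimum, and the constraint map $\Phi$ is surjective onto $\psd^n$ restricted appropriately, which supplies the needed regularity (a Slater-type condition). I would present the argument via the compactness route since it is self-contained and uses only Banach--Alaoglu, which the paper has already been invoking, rather than citing an external LP-duality black box.
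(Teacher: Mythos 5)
Your approach is essentially the paper's: the easy inequality $\delta_+\le\gp$ by pairing an optimal decomposition with a feasible $T$, and the hard inequality by Hahn--Banach separation in $\sym^n\times\R$, with closedness supplied by Banach--Alaoglu. The structural difference is that you strictly separate the point $(A,\gp(A)-\eta)$ from the cone $K=\set{(\Phi(\mu),\mu(\Sone)) : \mu\in\M_+(\Sone)}$ and then send $\eta\to 0$, whereas the paper augments the cone with a slack ($r\ge 0$), argues by contradiction from $(A,\delta_+(A))\notin\overline{\mathcal{K}}$, and then splits into the cases $\lambda>0$ and $\lambda=0$. Your route is a bit leaner, but two points need fixing.

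First, the sign is wrong. Writing the strict separation as
$\tr(TA)+s(\gp(A)-\eta)<\alpha<\tr(T\Phi(\mu))+s\,\mu(\Sone)$ for all $\mu\in\M_+(\Sone)$: taking $\mu=0$ gives $\alpha<0$, and the cone structure forces $\tr(T\Phi(\mu))+s\,\mu(\Sone)\ge 0$. Now use that the minimum in Theorem \ref{thm:gamma+superresolution} is attained, so $(A,\gp(A))\in K$ and hence $\tr(TA)+s\,\gp(A)\ge 0$. Subtracting this from the separation inequality gives $-s\eta<\alpha<0$, hence $s>0$ --- not $s<0$ as you stated. Then $\jbrac{Tx,x}+s\ge 0$ for all $x\in\Sone$ rearranges to $\jbrac{(-T/s)x,x}\le 1$, so $-T/s$ is feasible for $\delta_+$, and $\delta_+(A)\ge -\tr(TA)/s>\gp(A)-\eta$. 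Your formula $-T/|s|$ happens to coincide because $|s|=s$, but the stated reason is incorrect.

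Second, you attribute the absence of a vertical separating hyperplane to closedness. Closedness guarantees that a separating $(T,s)$ exists; it does not by itself preclude $s=0$. In your setup, what rules out $s=0$ is precisely the attainment argument above: $(A,\gp(A))\in K$ together with strict separation of $(A,\gp(A)-\eta)$ forces $s\eta>0$. This is in fact the dividend you collect from already knowing (Theorem \ref{thm:gamma+superresolution}) that the primal infimum is achieved, and it is the step that lets you skip the paper's $\lambda=0$ case; it deserves to be stated explicitly rather than waved at. The closedness sketch itself (total-variation bound from $\mu_j(\Sone)\to M$, weak-$*$ compactness, continuity of $\Phi$ and of integration against $1\in C(\Sone)$) is correct and matches Lemma \ref{lem: K closed}.
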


We will break down the proof of Theorem \ref{thm:strong duality} into several steps in the rest of the section.

\begin{lemma}\label{lem: delta+ leq gamma+}
 $\delta_+(A)\leq \gp(A)$ for all $A \in \psd^n$, $n \in \mathbb{N}$.
\end{lemma}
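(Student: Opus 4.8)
The plan is to prove this inequality by the usual weak-duality pairing: take an arbitrary primal decomposition of $A$ and an arbitrary dual-feasible matrix $T$, and show that the dual objective at $T$ never exceeds the primal objective at the decomposition. Concretely, fix $A \in \psd^n$, let $T \in \sym^n$ be any matrix with $\jbrac{Tx,x} \le 1$ for all $x \in \Sone$, and let $A = \sum_{k=1}^N x_k x_k^*$ be any decomposition of the form appearing in the definition \eqref{eq:def:gamma+} of $\gp$.

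First I would expand the dual objective along this decomposition, using linearity and the cyclic property of the trace:
\begin{equation*}
    \tr(AT) = \sum_{k=1}^N \tr\brac{x_k x_k^* T} = \sum_{k=1}^N \jbrac{T x_k, x_k}.
\end{equation*}
Next, for each index $k$ with $x_k \neq 0$, the normalized vector $x_k/\norm{x_k}_1$ lies on $\Sone$, so the dual feasibility of $T$ and the quadratic homogeneity of $x \mapsto \jbrac{Tx,x}$ give
\begin{equation*}
    \jbrac{T x_k, x_k} = \norm{x_k}_1^2 \, \jbrac{T \tfrac{x_k}{\norm{x_k}_1}, \tfrac{x_k}{\norm{x_k}_1}} \le \norm{x_k}_1^2 .
\end{equation*}
For $x_k = 0$ the corresponding term is zero and the bound holds trivially, so summing over $k$ yields $\tr(AT) \le \sum_{k=1}^N \norm{x_k}_1^2$.

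Finally, since the decomposition was arbitrary, I would take the infimum over all admissible decompositions of $A$ on the right-hand side to obtain $\tr(AT) \le \gp(A)$; and since $T$ was an arbitrary dual-feasible matrix, taking the supremum over such $T$ on the left-hand side gives $\delta_+(A) \le \gp(A)$. There is no genuine obstacle in this direction — it is exactly weak linear-programming duality — and the only step deserving a word of care is the normalization, which requires $x_k \neq 0$ and is handled by discarding the vanishing terms. The substantive content, namely the reverse inequality $\delta_+(A) \ge \gp(A)$ (i.e. strong duality, the absence of a duality gap), is what the subsequent steps in this section will have to establish.
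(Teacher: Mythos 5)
Your proof is correct and is essentially the same as the paper's: expand $\tr(AT)$ along a rank-one decomposition, normalize each $x_k$ to lie on $\Sone$, invoke dual feasibility, and sum. The only cosmetic difference is that the paper starts from an optimal decomposition (its existence being guaranteed by Proposition \ref{prop:gamma+ basics}(B)) and so writes $\leq \gp(A)$ directly, whereas you work with an arbitrary decomposition and pass to the infimum at the end; your explicit handling of the $x_k = 0$ case is a small point the paper leaves implicit.
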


\begin{proof}
    Fix $A \in \psd^n$ and let $\epsilon > 0$. Let $A = \sum_{k = 1}^N x_k x_k^*$ be a decomposition such that $\sum_k \norm{x_k}_1^2 = \gp(A)$. Note that for any $T = T^*$ with $\jbrac{Tx,x}\leq 1$ for all $x \in \Sone$,
    \begin{equation*}
        \tr(AT) = \tr\brac{\sum_{k}x_kx_k^* \cdot T} = \sum_k \jbrac{T x_k,x_k} = \sum_k \norm{x_k}_1^2 \jbrac{\frac{Tx_k}{\norm{x_k}_1},\frac{x_k}{\norm{x_k}_1}} \leq \sum_{k}\norm{x_k}_1^2 \leq \gp(A).
    \end{equation*}
\end{proof}

\begin{theorem}[Hyperplane Separation Theorem]\label{thm:Hahn-Banach sep}
    Let $K\subset \R^d$ be a closed convex set and let $x_0 \in \R^d \setminus K$. Then there exists a bounded linear functional $\phi$ and a real number $\alpha$ such that
    \begin{equation*}
        \phi(x_0) < \alpha < \phi(x)
        \quad \text{ for all } x \in K.
    \end{equation*}
\end{theorem}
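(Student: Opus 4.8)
The plan is to prove this by the nearest-point (metric projection) argument, the standard route to strict separation of a point from a closed convex set in finite dimensions; in $\R^d$ no deep functional-analytic input is needed, since every linear functional is automatically bounded.

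First I would dispose of the trivial case $K = \void$ (take any nonzero $\phi$ and $\alpha$ slightly larger than $\phi(x_0)$) and henceforth assume $K \neq \void$. Then I would construct the projection of $x_0$ onto $K$: fixing $y_0 \in K$ and setting $r := \norm{y_0 - x_0}_2$, the set $K \cap \set{x : \norm{x-x_0}_2 \le r}$ is closed and bounded, hence compact by Heine–Borel, so the continuous function $x \mapsto \norm{x - x_0}_2$ attains a minimum on it at some $p \in K$; this $p$ also minimizes the distance over all of $K$, and $d := \norm{p - x_0}_2 = \dist{x_0}{K} > 0$ because $x_0 \notin K$ and $K$ is closed.

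Next I would extract the variational inequality characterizing $p$: for any $x \in K$ and $t \in (0,1]$ convexity gives $p + t(x-p) \in K$, so $t \mapsto \norm{p + t(x-p) - x_0}_2^2$ has a minimum at $t = 0$; expanding and letting $t \to 0^+$ yields $\jbrac{p - x_0,\, x - p} \ge 0$ for all $x \in K$. Now set $v := p - x_0 \neq 0$ and $\phi(x) := \jbrac{v, x}$. The variational inequality says $\phi(x) \ge \phi(p)$ for every $x \in K$, i.e. $\phi(p) = \min_{x \in K}\phi(x)$, while $\phi(p) - \phi(x_0) = \jbrac{v,v} = d^2 > 0$. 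Choosing $\alpha := \phi(p) - \tfrac12 d^2$ then gives $\phi(x_0) = \phi(p) - d^2 < \alpha < \phi(p) \le \phi(x)$ for all $x \in K$, which is the claimed separation.

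The only step that requires genuine care is the existence of the projection $p$ — that the infimum of $\norm{\cdot - x_0}$ over $K$ is attained — which is exactly where closedness of $K$ and finite-dimensionality (compactness of closed bounded sets) are used; convexity enters only afterwards, to produce the one-sided variational inequality that orients the separating functional and makes the strict inequality on the $K$-side uniform.
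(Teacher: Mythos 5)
Your proof is correct, and it uses the standard nearest-point (metric projection) argument for strict separation of a point from a nonempty closed convex set in $\R^d$. The paper does not actually prove this theorem — it simply cites a textbook reference (Boyd and Vandenberghe, \emph{Convex Optimization}, Example 2.20) — so there is no "paper proof" to compare against, but the argument you give is essentially the one in that reference and is the canonical route in finite dimensions. Each step is sound: the Heine--Borel reduction to a compact sublevel set to get existence of the projection $p$, the first-order variational inequality $\jbrac{p - x_0, x - p} \ge 0$ obtained from convexity by letting $t \to 0^+$, and the explicit choice $\phi = \jbrac{p - x_0, \cdot}$ with $\alpha = \phi(p) - \tfrac12 d^2$ to separate strictly. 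The only stylistic remark is that the empty-$K$ case could be excluded from the statement (vacuously, any $\phi, \alpha$ work), but you handled it correctly anyway.
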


See \cite[Example 2.20]{BV-convexbook}. 

For the reverse inequality $\gp \leq \delta_+$, consider the convex body
\begin{equation}
    \mathcal{K}=\set{ (\Phi(\mu),\jbrac{\mu,1} + r) : \mu \in \M_+(\Sone),\, r \geq 0 }\subset \sym^n \times \R
    \label{eq:def:K}
\end{equation}
equipped with the induced (Euclidean) topology.

The next lemma is inspired by \cite{Anderson1983}. We invite the interested readers to \cite{Anderson1983} and references therein for background on linear programming duality in infinite-dimensional spaces.

\begin{lemma}\label{lem: delta+ geq gamma+}
    Let $A \in \psd^n$ and let $\mathcal{K}$ be as in \eqref{eq:def:K}.
    Then $(A,\delta_+(A)) \in \overline{\mathcal{K}}$.
\end{lemma}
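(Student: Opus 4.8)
The plan is to show that $(A,\delta_+(A))$ lies in the closure of $\mathcal K$ by contradiction, using the Hyperplane Separation Theorem. First I would verify that $\mathcal K$ is a closed convex subset of $\sym^n\times\R$; convexity is immediate from the linearity of $\Phi$ and the fact that $\M_+(\Sone)$ and $[0,\infty)$ are convex cones, while closedness will follow from a compactness argument on $\M_+(\Sone)$: if $(\Phi(\mu_j),\jbrac{\mu_j,1}+r_j)$ converges, then the masses $\mu_j(\Sone)$ are bounded, so by Banach--Alaoglu a subnet of the $\mu_j$ converges weak-$\ast$ to some $\mu\in\M_+(\Sone)$ (using that $\Sone$ is compact so $C(\Sone)$ is separable and the unit ball of $\M(\Sone)$ is weak-$\ast$ sequentially compact), and correspondingly $r_j\to r\ge 0$; since $\Phi$ is weak-$\ast$-to-Euclidean continuous, the limit point has the required form.

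Next, suppose for contradiction that $(A,\delta_+(A))\notin\overline{\mathcal K}$. By Theorem \ref{thm:Hahn-Banach sep} applied in the finite-dimensional space $\sym^n\times\R$, there is a linear functional, which I may write as $(T,s)\in\sym^n\times\R$ acting by $(B,t)\mapsto \tr(TB)+st$, and a scalar $\alpha$, such that
\begin{equation*}
    \tr(TA)+s\,\delta_+(A) < \alpha < \tr(T\,\Phi(\mu))+s(\jbrac{\mu,1}+r)\quad\text{for all }\mu\in\M_+(\Sone),\ r\ge0.
\end{equation*}
Taking $\mu=0$ and letting $r\to\infty$ forces $s\ge 0$; taking $\mu=0,r=0$ gives $\alpha<0$, hence $\tr(TA)+s\,\delta_+(A)<0$. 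Now I rescale. The key computation is that $\tr(T\,\Phi(\mu)) = \int_{\Sone}\jbrac{Tx,x}\,d\mu(x) = \int_{\Sone}\Phi^*(T)(x)\,d\mu(x)$, so the right-hand inequality reads $\int_{\Sone}\big(\jbrac{Tx,x}+s\big)\,d\mu(x) + sr > \alpha$ for all $\mu\ge0$, $r\ge0$. Testing against point masses $\mu = c\,\delta_x$ with $c>0$ and letting $c$ vary shows $\jbrac{Tx,x}+s\ge 0$ for every $x\in\Sone$, i.e. $\jbrac{(-T/s)x,x}\le 1$ for all $x\in\Sone$ — provided $s>0$.

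The main obstacle is the degenerate case $s=0$, which must be ruled out or handled separately. If $s=0$, the separation gives $\tr(TA)<0$ while $\tr(T\,\Phi(\mu))>\alpha$ (a negative constant independent of $\mu$) for all $\mu\in\M_+(\Sone)$; testing point masses yields $\jbrac{Tx,x}\ge 0$ on $\Sone$, hence $T\succeq 0$, which combined with $A\succeq0$ gives $\tr(TA)\ge0$, a contradiction. (If $A=0$ one checks directly that $(0,0)\in\mathcal K\subset\overline{\mathcal K}$ and there is nothing to prove, so we may assume $A\ne 0$.) Thus $s>0$, and setting $T':=-T/s\in\sym^n$ we have $\jbrac{T'x,x}\le1$ for all $x\in\Sone$, so $T'$ is feasible for the program defining $\delta_+$, giving $\tr(AT')\le\delta_+(A)$. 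But the left inequality $\tr(TA)+s\,\delta_+(A)<0$ rearranges to $\delta_+(A)<\tr((-T/s)A)=\tr(AT')$, contradicting feasibility. Hence no separating hyperplane exists and $(A,\delta_+(A))\in\overline{\mathcal K}$, as claimed.
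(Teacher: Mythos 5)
Your proof is correct and follows the same overall strategy as the paper's: suppose $(A,\delta_+(A))\notin\overline{\mathcal K}$, invoke the Hyperplane Separation Theorem, deduce $s=\lambda\ge 0$ and $\tr(TA)+s\,\delta_+(A)<0$, and split into $s>0$ and $s=0$. The $s>0$ branch is identical. Where you genuinely diverge is the degenerate case $s=0$: the paper takes any $\tilde T$ feasible for $\delta_+$, observes that $\tilde T-\alpha T_0$ remains feasible for all $\alpha\ge 0$, and lets $\alpha\to\infty$ to force $\tr(AT_0)\ge 0$; you instead note directly that $\jbrac{Tx,x}\ge 0$ on $\Sone$ implies (by $\ell_1$-homogeneity) $T\succcurlyeq 0$, so $\tr(TA)\ge 0$ because both $T$ and $A$ are positive semidefinite. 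Your argument is cleaner and avoids having to produce an auxiliary feasible $\tilde T$; the paper's version is written so as to look like a textbook asymptotic-direction argument in LP duality and generalizes more readily to settings where positivity of the separating functional isn't immediate. Two small remarks: you do not actually need to prove $\mathcal K$ is closed inside this lemma (you only separate from $\overline{\mathcal K}$, which is automatically closed and inherits convexity; the paper establishes closedness as a separate lemma before concluding the duality theorem), and the explicit reduction to $A\ne 0$ is unnecessary since your $s=0$ argument already handles $A=0$, where $\tr(TA)=0$ still contradicts $\tr(TA)<\alpha<0$.
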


\begin{proof}
   Let $M = \delta_+(A)$.
    Suppose towards a contradiction, that $(A,M)\notin \overline{\mathcal{K}}$. Then $(A,M)$ is separated from $\mathcal{K}$ by a hyperplane in the sense of Theorem \ref{thm:Hahn-Banach sep}, i.e., there exists $(T_0, \lambda) \in \sym^n\times \R$ such that
    \begin{equation}
        \tr(AT_0) + \lambda M < \tr\brac{\int_{\Sone}xx^*d\mu \cdot T_0} + \lambda\int_{\Sone}d\mu + \lambda r
        \text{ for all } \mu \in \M_+(\Sone) \text{ and } r\geq 0.
        \label{eq:lem:duality-2}
    \end{equation}
    By setting $\mu = 0$ and $r = 0$ in \eqref{eq:lem:duality-2}, we see that
    \begin{equation}
        \tr(AT_0) + \lambda M < 0.
        \label{eq:lem:duality-contradiction}
    \end{equation}
    Note that the right hand side of \eqref{eq:lem:duality-2} must also be nonnegative, i.e.,
    \begin{equation}
        \tr\brac{\int_{\Sone}xx^*d\mu \cdot T_0} + \lambda\int_{\Sone}d\mu + \lambda r \geq 0\quad
        \text{ for all } \mu \in \M_+(\Sone) \text{ and } r\geq 0.
        \label{eq:lem:duality-3}
    \end{equation}

    By setting $r = 0$ in \eqref{eq:lem:duality-3} and let $\mu$ range over all possible Borel measures, we have 
    \begin{equation}
        \Phi^*(T)(x) = \jbrac{T_0x,x} \geq -\lambda
        \quad\text{ for all } x \in \Sone.
        \label{eq:lem:duality-4}
    \end{equation}
    
    By setting $\mu = 0$ in \eqref{eq:lem:duality-3}, we have 
    \begin{equation*}
        \lambda \geq 0.
    \end{equation*}

     \underline{Suppose $\lambda > 0$.} Then $-\lambda^{-1}T_0 \in \sym^n$ and $\jbrac{-\lambda^{-1}T_0x,x} = -\lambda^{-1}\jbrac{T_0x,x} \leq 1$ by \eqref{eq:lem:duality-4}. Therefore, $-\lambda^{-1}T_0$ is feasible for $\delta_+$. By \eqref{eq:lem:duality-contradiction},
            \begin{equation*}
                \tr(A(-\lambda^{-1}T_0)) > M,
            \end{equation*}
            contradicting $M$ being the supremum.

    \underline{Suppose $\lambda = 0$.} Let $\tilde{T} \in \sym^n$ with $\jbrac{Tx,x} \leq 1$ for all $z \in \Sone$. It is clear that $\tilde{T}-\alpha T_0 \in \sym^n$ for all $\alpha \geq 0$. Since $\Phi(T_0)(z) \geq 0$ for all $z \in \Sone$, we have $\jbrac{z,(\tilde{T} -\alpha T)z} \leq 1$ for all $z \in \Sone$ and $\alpha\geq 0$. Therefore, $\tilde{T}-\alpha T_0$ is feasible for $\delta_+$ for all $\alpha \geq 0$. Since $\tr(A(\tilde{T}-\alpha T_0)) \leq  \delta_+(A) = M < \infty$, we must have $\tr(AT_0) \geq 0$. This contradicts \eqref{eq:lem:duality-contradiction}.

In either case, we arrive at a contradiction. Therefore, $(A,M) \in \overline{\mathcal{K}}$.

\end{proof}

\begin{lemma}\label{lem: K closed}
    Let $\mathcal{K}$ be as in \eqref{eq:def:K}. $\mathcal{K}$ is closed. 
\end{lemma}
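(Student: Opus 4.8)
The plan is to show that $\mathcal{K}$ is closed by exhibiting it as the image of a compact set under a continuous map, after truncating the unbounded "slack" direction $r$. First I would observe that the map $\mu \mapsto (\Phi(\mu), \jbrac{\mu,1})$ from $\M_+(\Sone)$ to $\sym^n \times \R$ is weak-star to Euclidean continuous, and that $\Phi(\mu)$ determines $\jbrac{\mu,1}$ up to the inequality: indeed, for $\mu \in \M_+(\Sone)$ one has $\tr(\Phi(\mu)) = \int_{\Sone}\tr(xx^*)\,d\mu = \int_{\Sone}\norm{x}_2^2\,d\mu$, and since $x \in \Sone$ means $\norm{x}_1 = 1$, Hölder gives $n^{-1} \leq \norm{x}_2^2 \leq 1$, hence
\begin{equation*}
    \tr(\Phi(\mu)) \;\leq\; \jbrac{\mu,1} \;=\; \mu(\Sone) \;\leq\; n\,\tr(\Phi(\mu)).
\end{equation*}
This is the key bound: it says that along any sequence in $\mathcal{K}$ whose first coordinate $\Phi(\mu_j)$ converges (hence stays bounded in trace), the total masses $\mu_j(\Sone)$ stay bounded, so we may invoke Banach–Alaoglu to extract a weak-star convergent subsequence of the $\mu_j$.

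Next I would handle the slack variable. Suppose $(A_j, s_j) \in \mathcal{K}$ with $(A_j,s_j) \to (A,s)$ in $\sym^n \times \R$; write $s_j = \mu_j(\Sone) + r_j$ with $\mu_j \in \M_+(\Sone)$, $r_j \geq 0$, and $\Phi(\mu_j) = A_j$. Since $A_j \to A$, the traces $\tr(A_j)$ are bounded, so by the displayed inequality $\mu_j(\Sone) \leq n\,\tr(A_j)$ is bounded; since $s_j$ is bounded, $r_j = s_j - \mu_j(\Sone)$ is bounded as well. Passing to a subsequence, $\mu_j \xrightarrow{w^*} \mu \in \M_+(\Sone)$ (the positive cone is weak-star closed) and $r_j \to r \geq 0$ (the nonnegative reals are closed). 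By weak-star continuity of $\Phi$ and of $\jbrac{\cdot,1}$ (the constant function $1$ is continuous on $\Sone$), we get $\Phi(\mu) = \lim \Phi(\mu_j) = A$ and $\mu(\Sone) + r = \lim(\mu_j(\Sone) + r_j) = s$. Hence $(A,s) = (\Phi(\mu), \jbrac{\mu,1} + r) \in \mathcal{K}$, proving $\mathcal{K}$ is closed.

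The only genuine obstacle is the unboundedness of $\mathcal{K}$ in both the $r$-direction and (a priori) in the measure-mass direction; the mass issue is exactly what the trace sandwich $\tr(\Phi(\mu)) \leq \mu(\Sone) \leq n\,\tr(\Phi(\mu))$ resolves, reducing everything to a compactness argument on bounded subsets of $\M_+(\Sone)$, and the $r$-direction is harmless because convergence of the scalar second coordinate forces boundedness of $r_j$ once $\mu_j(\Sone)$ is controlled. One should double-check that $\Sone$, the $\ell_1^n$ unit sphere, is compact (it is, being a closed bounded subset of $\R^n$ or $\C^n$), so that $C(\Sone)$ is a genuine Banach space with $C(\Sone)^* \cong \M(\Sone)$ and Banach–Alaoglu applies to norm-bounded sets of measures; this is already invoked in the proof of Theorem \ref{thm:gamma+superresolution}, so it is safe to cite. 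No step requires more than these standard facts together with the elementary Hölder inequality on $\Sone$.
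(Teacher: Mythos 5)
Your proof is correct and follows essentially the same route as the paper: extract a weak-star convergent subsequence of the measures via Banach--Alaoglu, pass to the limit against $xx^*$ and against $1$, and observe the slack stays nonnegative. The one place where you supply extra detail is the boundedness of $\mu_j(\Sone)$, which the paper asserts without justification; your trace sandwich $\tr(\Phi(\mu_j)) \leq \mu_j(\Sone) \leq n\,\tr(\Phi(\mu_j))$ is a clean way to get it, though a slightly shorter route is to use the slack constraint directly, namely $\mu_j(\Sone) = M_j - r_j \leq M_j$ since $r_j \geq 0$ and $M_j$ is convergent, hence bounded. Either way the argument goes through, and your version has the minor virtue of not relying on the $r_j \geq 0$ hypothesis for the compactness step.
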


\begin{proof}
    Let $(A_j,M_j)_j$ be a sequence in $\mathcal{K}$ converging to $(A,M) \in \sym^n \times \R$. By definition, there exists a sequence $\mu_j \in \M_+(\Sone)$ and $r_j = M_j - \int_{\Sone}d\mu_j \in \R$, such that
    \begin{equation*}
        A_j = \int_{\Sone}xx^*d\mu_j.
    \end{equation*}
    In particular, the sequence $\mu_j$ is bounded. By Banach-Alaoglu, we can replace $\mu_j$ by a subsequence, such that $\mu_j \to \mu\in\M_+(\Sone)$ in the weak-star topology. 
    Integrating against $xx^*$, we have
    \begin{equation*}
        \int_{\Sone}xx^*d\mu = A.
    \end{equation*}
    Integrating against the constant function $1$, we have
    \begin{equation*}
        M = \int_{\Sone}d\mu + \lim_{j\to\infty} r_j.
    \end{equation*}
    The last limit is nonnegative since $r_j \geq 0$ for all $j$. Therefore, $(A,M) \in \mathcal{K}$.  
\end{proof}

\begin{proof}[Proof of Theorem \ref{thm:strong duality}]
    Theorem \ref{thm:strong duality} follows from Lemmas \ref{lem: delta+ leq gamma+}, \ref{lem: delta+ geq gamma+}, and \ref{lem: K closed}.
\end{proof}

\subsection{Adjoint Functionals}

Let $T \in \sym^n$. We define
\begin{align}
    \rho_1(T) &:= \sup\set{\tr(T\cdot xx^*) = \jbrac{Tx,x} : \norm{x}_1 \leq 1}, \text{ and }
    \label{eq:def:rho+}
    \\
    \pi_+(T) &:= \sup\set{\tr(TA) : A \in \psd^n \text{ and } \norm{A}_{1,1} \leq 1}
    \label{eq:def:pi+}
\end{align}

The suprema are in fact maxima since the feasible regions are compact. We have
\begin{equation*}
    0 \leq \rho_1(T) \leq \pi_+(T) \leq \norm{T}_{\infty,\infty}.
\end{equation*}
See the discussion in Section \ref{sect:psd-relaxation} below.

% Note that the constraint $\norm{x}_1 = 1$ in \eqref{eq:def:rho+} should be interpreted as
% \begin{equation*}
%     1 = \norm{x}_1 = \norm{x}_1^2 = \gp(xx^*) = \g(xx^*).
% \end{equation*}

The name "adjoint" stems from adjoint Banach ideals. See \cite{DJT95}. In \eqref{eq:def:rho+}, if we replace $\jbrac{Tx,x}$ by $\abs{\jbrac{Tx,x}}$, then the right-hand-side is comparable to the operator norm of $T: \ell_1^n \to \ell_\infty^n$, and $(\L(\ell_1^n,\ell_\infty^n),\norm{\cdot}_{1\to\infty})$ is the adjoint ideal of $(\L(\ell_\infty^n,\ell_1^n),\gamma)$. A similar analogy can be made for \eqref{eq:def:pi+}.

Our final reduction of Problem \ref{prob:matrix} is captured by the following. 

\begin{theorem}
\label{thm:equivalence:pi-gamma}
    Given $n \in \mathbb{N}$, we have
    \begin{equation*}
       C_n =\sup_{A \in \psd^n\setminus\set{0}} \frac{\gp(A)}{\norm{A}_{1,1}} = \sup_{T \in \sym^n\setminus\set{0}}\frac{\pi_+(T)}{\rho_1(T)}.
    \end{equation*}
    In the above, we use the convention $\frac{0}{0} = 0$.
\end{theorem}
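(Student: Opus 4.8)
The idea is to collapse both sides of the asserted equality onto a single symmetric bilinear quantity, after which the identity is just a swap of two suprema. The two inputs are: (i) the strong duality theorem (Theorem~\ref{thm:strong duality}), which gives $\gp(A) = \delta_+(A)$ for every $A\in\psd^n$, together with the observation that the feasibility constraint ``$\jbrac{Tx,x}\le 1$ for all $x\in\Sone$'' in the definition of $\delta_+$ is exactly ``$\rho_1(T)\le 1$'' — this is because $\jbrac{T(tx),t x} = t^2\jbrac{Tx,x}$ forces $\rho_1(T) = \max\set{0,\ \sup_{x\in\Sone}\jbrac{Tx,x}}$, so $\rho_1(T)\le 1$ iff $\jbrac{Tx,x}\le 1$ on $\Sone$; hence $\gp(A) = \sup\set{\tr(AT): T\in\sym^n,\ \rho_1(T)\le 1}$. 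And (ii), by definition, $\pi_+(T) = \sup\set{\tr(AT): A\in\psd^n,\ \norm{A}_{1,1}\le 1}$.

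Next I would introduce the ``master'' quantity
\[
E_n := \sup\set{\tr(AT) : A\in\psd^n,\ \norm{A}_{1,1}\le 1,\ T\in\sym^n,\ \rho_1(T)\le 1},
\]
and prove $C_n = E_n$ and $\sup_{T\neq 0}\pi_+(T)/\rho_1(T) = E_n$ separately; the theorem then follows because $E_n$ is visibly symmetric in the roles of the two outer suprema. For $C_n = E_n$: since $\gp$ and $\norm{\cdot}_{1,1}$ are positively homogeneous (Proposition~\ref{prop:gamma+ basics}(A)), the ratio $\gp(A)/\norm{A}_{1,1}$ is scale-invariant, so $C_n = \sup\set{\gp(A): A\in\psd^n,\ \norm{A}_{1,1}=1}$, and this equals $\sup\set{\gp(A): A\in\psd^n,\ \norm{A}_{1,1}\le 1}$ because $\gp(A) = \norm{A}_{1,1}\gp(A/\norm{A}_{1,1})\le \gp(A/\norm{A}_{1,1})$ when $0<\norm{A}_{1,1}\le 1$ and $\gp(0)=0$. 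Plugging in (i) gives $C_n = \sup_{\norm{A}_{1,1}\le 1}\sup_{\rho_1(T)\le 1}\tr(AT) = E_n$.

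For the adjoint side: $\pi_+$ and $\rho_1$ are also positively homogeneous, so $\pi_+(T)/\rho_1(T)$ is scale-invariant, whence $\sup_{T\neq 0}\pi_+(T)/\rho_1(T) = \sup\set{\pi_+(T): T\in\sym^n,\ \rho_1(T)\le 1}$ — here I need the small observation that $\rho_1(T)=0$ forces $\pi_+(T)=0$ (if $\rho_1(T)=0$ then $\jbrac{Tx,x}\le 0$ for all $x$, i.e. $-T\in\psd^n$, so $\tr(AT)\le 0$ for every $A\in\psd^n$ while $A=0$ is feasible, so $\pi_+(T)=0$, consistent with the convention $0/0=0$), plus the same ``$=1$ versus $\le 1$'' reduction as above using $\pi_+\ge 0$ and homogeneity. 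Combining with (ii) yields $\sup_{T\neq 0}\pi_+(T)/\rho_1(T) = \sup_{\rho_1(T)\le 1}\sup_{\norm{A}_{1,1}\le 1}\tr(AT) = E_n$, and the proof is complete.

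I do not anticipate a substantive obstacle: all the analytic content is already encapsulated in Theorem~\ref{thm:strong duality}, and what is left is bookkeeping. The only points demanding care are matching the feasibility region in the definition of $\delta_+$ (the ``sphere'' constraint $x\in\Sone$) with the sublevel set $\set{T:\rho_1(T)\le 1}$ (the ``ball'' constraint $\norm{x}_1\le 1$), which I would dispatch at the very start, and verifying that passing between the normalizations $\norm{A}_{1,1}=1$, $\rho_1(T)=1$ and $\norm{A}_{1,1}\le 1$, $\rho_1(T)\le 1$ does not disturb the suprema — both of which rest only on positive homogeneity and on the nonnegativity of $\gp$, $\pi_+$, and $\rho_1$.
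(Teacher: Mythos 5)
Your argument is correct and matches the paper's proof in substance: both reduce the identity to Theorem~\ref{thm:strong duality} (rewriting $\gp(A)$ as $\sup\{\tr(AT):\rho_1(T)\le 1\}$) and then use the positive homogeneity of $\gp$, $\norm{\cdot}_{1,1}$, $\pi_+$, and $\rho_1$ to normalize and swap the two suprema. The paper's version is terser — it writes the chain $\sup_{T}\pi_+(T)/\rho_1(T) = \sup_{\rho_1(\tilde{T})\le 1}\sup_{\norm{A}_{1,1}\le 1}\tr(A\tilde{T}) = \sup_A \gp(A)/\norm{A}_{1,1}$ directly — whereas you spell out the ``$\Sone$ constraint $\Leftrightarrow \rho_1(T)\le 1$'' equivalence and the degenerate $\rho_1(T)=0 \Rightarrow \pi_+(T)=0$ case that the paper leaves implicit behind the $0/0=0$ convention; these are good details to make explicit, but they do not constitute a different route.
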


\begin{proof}
    Note that $\jbrac{z,\frac{T}{\rho_1(T)}z} \leq 1$ for all $T \in \sym^n\setminus\set{0}$, so 
    \begin{equation*}
        \begin{split}
            \sup_{T \in \sym^n\setminus\set{0}}\frac{\pi_+(T)}{\rho_1(T)}
            &=
            \sup_{T\in\sym^n\setminus\set{0},\, \rho_1(T) > 0}\pi_+(T/\rho_1(T))\\
            &= \sup\set{\tr(A\Tilde{T})
            : A \in \psd^n,\, \norm{A}_{1,1} \leq 1,\, \jbrac{\Tilde{T}x,x} \leq 1\,\forall x \in \Sone
            }\\
            \text{(Theorem \ref{thm:strong duality})}\quad&= \sup_{A\in\psd^n\setminus\set{0}}\gp(A/\norm{A}_{1,1}) \\
            \text{(Proposition \ref{prop:gamma+ basics}(A))}\quad &= \sup_{A \in \psd^n\setminus\set{0}} \frac{\gp(A)}{\norm{A}_{1,1}}.
        \end{split}
    \end{equation*}
\end{proof}

\section{Convex relaxation of an $\ell_1$ optimization problem}

\label{sect:psd-relaxation}

In this section, we present an application of Theorem \ref{thm:equivalence:pi-gamma} to a non-convex optimization problem.

Recall the (finite-dimensional) optimization problems associated with $\rho_1$ and $\pi_+$ in \eqref{eq:def:rho+} and \eqref{eq:def:pi+}. 
% By the definition \eqref{eq:def:gamma} of $\g$, we can write $\pi_+(T)$ as
% \begin{equation*}
%     \pi_+(T) = \sup\set{
%     \sum_{k} \jbrac{Tx_k, y_k} : \sum_{k}\norm{x_k}_1\norm{y_k}_1 \leq 1
%     }.
% \end{equation*}
We can then view $\pi_+$ as a \emph{convex relaxation} of $\rho_1$ in the following sense. 
Recall Proposition \ref{prop:special-case-1}(A) and write $\g(A)$ in place of $\g(A:\ell_\infty^n\to \ell_1^n)$.
From $\rho_1$ to $\pi_+$, the constraint
\begin{equation*}
    \norm{x}_1 \leq 1
    \quad \Leftrightarrow \quad \norm{x}_1^2 = \gp(xx^*) = \g(xx^*)  =\norm{xx^*}_{1,1} \leq 1
\end{equation*}
is replaced by a convex higher-rank analogue 
\begin{equation}
   \norm{A}_{1,1} = \g(A) \leq 1.
   \label{eq:A11leq1}
\end{equation}
In the real case, i.e., $A = (A_{kl}) \in \psd^n(\R)$, we can introduce slack variables $(B_{kl})_{k,l = 1}^n$ and rewrite
the constraint \eqref{eq:A11leq1} as
\begin{equation*}
    -B_{kl} \leq A_{kl} \leq B_{kl},
    \,
    B_{kl} \geq 0,\,\text{and }
    \sum_{k,l = 1}^n B_{kl} \leq 1.
\end{equation*}

We obtain the following bound with $C_n$ as in Theorem \ref{thm:equivalence:pi-gamma}.

\begin{theorem}\label{thm:5.1}
For all $T \in \sym^n$, we have
\begin{equation} \label{eq:dualform}
    \rho_1(T) \leq \pi_+(T) \leq C_n\rho_1(T).
\end{equation}
\end{theorem}

In view of Remark \ref{rem:BMS24}, we have $C_n \geq c\sqrt{n}$ for some universal constant $c$.
In contrast with the relaxation of $\norm{\cdot}_{\infty\to 1}$ via (symmetric) Grothendieck's inequality \cite{friedland2020-Grothsym}, the quality of the $(\rho_1,\pi_+)$ relaxation deteriorates at the rate proportional to at least the square root of the ambient dimension for certain matrices.

\begin{remark}
        We know two cases where computing $\rho_1(T)$, $T \in \sym^n$, is easy. 
    \begin{itemize}
        \item Suppose $T$ is negative-semidefinite, then $\rho_1(T) = \pi_+(T) = 0$. 
        \item Suppose $T$ is positive-semidefinite, then
        \begin{equation}
            \eqindent
            \rho_1(T) = \norm{T}_{\infty, \infty} = \max\limits_{k}T(k,k).
            \label{eq:rho+(T)-psd}
        \end{equation}
        The first identity in \eqref{eq:rho+(T)-psd} follows because
        \begin{equation*}
            \sup_{\norm{x}_1 \leq 1}\jbrac{Tx,x} = \sup_{\norm{x}_1 \leq 1}\abs{\jbrac{Tx,x}} = \norm{T}_{\infty,\infty},
        \end{equation*}
        The second identity in \eqref{eq:rho+(T)-psd} follows from the fact that the largest entry of a positive-semidefinite matrix must occur on the diagonal. In this case, $\pi_+(T) = \rho_1(T)$. See Theorem \ref{thm:pi+dual} below. 
    \end{itemize}

\end{remark}

We also provide an alternative formula for $\pi_+$, which can also be interpreted as a strong duality result. 

\begin{theorem}\label{thm:pi+dual}
    For all $T \in \sym^n$, $\pi_+(T) = \min\set{\norm{T+Y}_{\infty,\infty} : Y \in \psd^n}$.
\end{theorem}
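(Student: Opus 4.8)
The plan is to establish the identity $\pi_+(T) = \min\set{\norm{T+Y}_{\infty,\infty} : Y \in \psd^n}$ by proving two inequalities, treating this as a (finite-dimensional) strong-duality statement. Recall from \eqref{eq:def:pi+} that $\pi_+(T) = \max\set{\tr(TA) : A \in \psd^n,\ \norm{A}_{1,1} \leq 1}$.

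\emph{The easy inequality $\pi_+(T) \leq \norm{T+Y}_{\infty,\infty}$ for every $Y \in \psd^n$.} Let $A \in \psd^n$ be feasible for $\pi_+$, i.e.\ $\norm{A}_{1,1} \leq 1$. For any $Y \in \psd^n$ we have $\tr(YA) \geq 0$ (trace of a product of two PSD matrices), hence $\tr(TA) \leq \tr((T+Y)A)$. Now estimate $\tr((T+Y)A) = \sum_{k,l}(T+Y)_{kl}A_{lk} \leq \norm{T+Y}_{\infty,\infty}\sum_{k,l}\abs{A_{kl}} = \norm{T+Y}_{\infty,\infty}\norm{A}_{1,1} \leq \norm{T+Y}_{\infty,\infty}$. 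Taking the supremum over feasible $A$ and then the infimum over $Y \in \psd^n$ gives $\pi_+(T) \leq \inf_{Y\in\psd^n}\norm{T+Y}_{\infty,\infty}$. (That the infimum is attained is a routine compactness remark: $\norm{T+Y}_{\infty,\infty} \to \infty$ as $\norm{Y}\to\infty$ since $Y \succeq 0$ forces the diagonal entries of $T+Y$ to grow, so the infimum may be restricted to a compact set.)

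\emph{The reverse inequality.} Here I would mimic the Hahn–Banach/separation argument already used in the proof of Lemma \ref{lem: delta+ geq gamma+}, but now in the purely finite-dimensional pairing $(\sym^n,\sym^n)$. Set $M := \pi_+(T)$ and consider the convex set $\mathcal{K} := \set{(A + S,\ \norm{A}_{1,1} + r) : A \in \psd^n,\ S \in \psd^n,\ r \geq 0} \subset \sym^n \times \R$, or a suitable variant that encodes both the PSD constraint on the relaxation variable and the $\norm{\cdot}_{1,1}$-ball constraint. One checks $\mathcal{K}$ is closed (the defining maps are continuous and the cone directions are ``coercive'' in the right coordinates, exactly as in Lemma \ref{lem: K closed}) and that $(0, 1)$-type points relate $\mathcal{K}$ to feasibility for $\pi_+$. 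If the reverse inequality failed, the point witnessing $\norm{T+Y}_{\infty,\infty} > M$ for all $Y\in\psd^n$ would be separated from $\mathcal{K}$ by a hyperplane given by some $(T_0,\lambda)$; the sign analysis of $\lambda$ (the case $\lambda > 0$ produces, after normalization, a PSD matrix $A$ with $\norm{A}_{1,1}\le 1$ and $\tr(TA) > M$, contradicting maximality; the case $\lambda = 0$ is excluded by a feasibility/recession argument) yields the contradiction. Alternatively, and perhaps more cleanly, one can invoke Lagrangian/conic strong duality directly: $\pi_+(T)$ is the value of a conic linear program over $\psd^n$ intersected with the polytope $\set{\norm{A}_{1,1}\le 1}$ (which in the real case is a genuine polyhedral description via the slack variables $B_{kl}$ introduced just before this theorem), its feasible set has nonempty interior (e.g.\ a small multiple of the identity is strictly feasible, a Slater point), so strong duality holds and the dual is precisely $\min\set{\norm{T+Y}_{\infty,\infty}: Y\in\psd^n}$ after identifying the dual variable attached to the PSD constraint with $Y$ and the one attached to $\norm{A}_{1,1}\le 1$ with the $\norm{\cdot}_{\infty,\infty}$ term.

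\emph{Main obstacle.} The bookkeeping in the strong-duality step: namely, writing the $\pi_+$ program in a form where Slater's condition is transparent, and correctly matching up the two dual blocks so that the PSD multiplier becomes $Y$ and the norm-ball multiplier becomes $\norm{T+Y}_{\infty,\infty}$ (note $\ell_\infty^{n}\text{-to-}\ell_1^n$ type duality is what turns the $\norm{\cdot}_{1,1}$ constraint into an $\norm{\cdot}_{\infty,\infty}$ objective, cf.\ \eqref{eq:gamma=oneone}). Once the LP is set up correctly this is standard, but getting the dual to collapse to the stated clean minimum — rather than an infimum over a pair of variables — requires observing that optimizing out the slack variables $B_{kl}$ exactly reconstitutes $\norm{T+Y}_{\infty,\infty} = \max_k (T+Y)(k,k)$ on $\psd^n$ (using that the largest entry of a PSD matrix sits on the diagonal, as noted in the preceding remark). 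I would present the separation-theorem version as the self-contained argument to keep the paper uniform with Section \ref{sect:duality}.
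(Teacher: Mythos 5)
Your argument is correct in substance, but it takes a genuinely different route from the paper's. The paper does not run a separation or Slater argument at all: it first applies $(\ell_\infty^n,\ell_1^n)$ duality to identify $\norm{T+Y}_{\infty,\infty}$ with $\max\{\tr((T+Y)A):\norm{A}_{1,1}\le 1\}$ (without any PSD restriction on $A$), then invokes Sion's minimax theorem (Theorem \ref{thm:Sion-minmax}) to swap $\inf_{Y\succeq 0}$ with this $\max$, and observes that $\inf_{Y\succeq 0}\tr(YA)$ equals $0$ when $A\succeq 0$ and $-\infty$ otherwise; the PSD constraint on $A$ is thus \emph{recovered} from the inner infimum rather than imposed and dualized. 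Your approaches — a Hahn--Banach separation argument on a convex set $\mathcal{K}\subset\sym^n\times\R$, or conic strong duality via a Slater point $\epsilon I$ — both work (you are right that $\epsilon I$ is strictly feasible and that optimizing out the slack multiplier reconstitutes $\norm{T+Y}_{\infty,\infty}$), and they have the stylistic merit of paralleling Lemmas \ref{lem: delta+ geq gamma+} and \ref{lem: K closed}. The paper's Sion route is shorter because the compactness of $\{\norm{A}_{1,1}\le 1\}$ does the work that Slater or closedness of $\mathcal{K}$ would otherwise have to do, and it sidesteps the bookkeeping of matching dual blocks which you correctly flag as the delicate point in your version. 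One small caveat on your sketch: the separation-theorem variant is only outlined (you defer the closedness of $\mathcal{K}$ and the sign analysis of the normal $(T_0,\lambda)$ to "as in Lemma \ref{lem: K closed}"), and that analogy would need to be checked, since your $\mathcal{K}$ uses $\norm{A}_{1,1}$ rather than a linear functional of a measure — but the conic-duality version you give as an alternative is self-contained and sound. Your compactness remark for turning the infimum into a minimum is also fine and essentially matches the paper's.
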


A key ingredient in the proof is the following generalization of von Neumann's minmax theorem.

\begin{theorem}[Sion's Minmax Theorem \cite{Sion}]\label{thm:Sion-minmax}
    Let $X$ be a compact convex subset of a topological vector space and let $Y$ be a convex subset of another topological vector space. Suppose $f:X\times Y \to \R$ satisfies the following.
    \begin{itemize}
        \item For all $x \in X$, $f(x,\cdot)$ is upper semicontinuous and quasi-concave on $Y$.
        \item For all $y \in Y$, $f(\cdot, y)$ is lower semicontinuous and and quasi-convex on $X$.
    \end{itemize}
    Then 
    \begin{equation*}
        \min_{x \in X}\sup_{y \in Y} f(x,y) = \sup_{y \in Y}\min_{x \in X} f(x,y).
    \end{equation*}
\end{theorem}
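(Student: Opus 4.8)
\textbf{Proof proposal for Theorem \ref{thm:Sion-minmax} (Sion's Minmax Theorem).}

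The plan is to prove the standard Sion minmax theorem following the classical level-set / Helly-type argument. Write $v^- := \sup_{y\in Y}\min_{x\in X} f(x,y)$ and $v^+ := \min_{x\in X}\sup_{y\in Y} f(x,y)$. The inequality $v^-\le v^+$ is immediate: for any fixed $y_0\in Y$ and any $x\in X$ we have $\min_{x'\in X}f(x',y_0)\le f(x,y_0)\le \sup_{y\in Y}f(x,y)$, and taking $\min$ over $x$ then $\sup$ over $y_0$ gives $v^-\le v^+$. (Note the inner $\min$ over the compact set $X$ is attained because $f(\cdot,y)$ is lower semicontinuous on the compact space $X$; this also guarantees $v^+$ itself is attained by the same reasoning applied to the lsc function $x\mapsto\sup_y f(x,y)$.) So the whole content is the reverse inequality $v^+\le v^-$.

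For the reverse inequality, suppose toward a contradiction that $v^- < \alpha < v^+$ for some real $\alpha$. For $y\in Y$ set $C_y := \{x\in X : f(x,y)\le\alpha\}$; by lower semicontinuity and quasi-convexity of $f(\cdot,y)$ each $C_y$ is closed and convex, hence compact. Since $\alpha < v^+ = \min_x\sup_y f(x,y)$, every $x\in X$ satisfies $f(x,y)>\alpha$ for some $y$, i.e. $\bigcap_{y\in Y} C_y = \emptyset$. By compactness of $X$ there is a finite subcollection $y_1,\dots,y_m\in Y$ with $\bigcap_{i=1}^m C_{y_i}=\emptyset$. The heart of the proof is then to derive a contradiction with $\alpha>v^-$, i.e. to exhibit a single $y^\ast\in Y$ (in the convex hull of $y_1,\dots,y_m$) with $f(x,y^\ast)>\alpha$ for all $x\in X$, which would force $\min_x f(x,y^\ast)\ge\alpha$, contradicting $v^-<\alpha$. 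One standard route: induct on $m$, the number of sets; the base case $m=2$ is the crucial geometric lemma, proved by looking at the segment $[y_1,y_2]$ in $Y$ and the convex compact sets $C_{y_1},C_{y_2}$ and $A_t:=\{x:f(x,y_t)\le\alpha\}$ for $y_t=(1-t)y_1+ty_2$, using upper semicontinuity/quasi-concavity of $f(x,\cdot)$ to control how $A_t$ varies; one shows $A_t\subset A_0\cup A_1$ is impossible for all $t$ unless $A_0\cap A_1\ne\emptyset$, contradicting $m=2$ emptiness. The inductive step reduces $m$ sets to $m-1$ by replacing two of the points $y_i,y_j$ by a suitable point on the segment between them, again using the $m=2$ lemma applied fibrewise. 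This gives the contradiction and hence $v^+\le v^-$.

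Alternatively — and I would mention this as the cleaner writeup if $Y$ may be taken to have additional structure — when one only needs the version used in Theorem \ref{thm:pi+dual} one can often arrange $X$ and $Y$ to be compact convex subsets of finite-dimensional spaces with $f$ bilinear (or concave-convex and continuous), in which case $v^+=v^-$ follows directly from the Hahn-Banach separation theorem (Theorem \ref{thm:Hahn-Banach sep}): the sets $\{(x,f(x,\cdot)) : x\in X\}$ and the ``lower'' halfspace at level $v^-$ in $X\times\mathbb{R}$ can be separated, producing the optimal $y^\ast$. Since the paper only invokes Sion's theorem as a black box, it is legitimate to cite \cite{Sion} for the proof; but if a self-contained argument is wanted, the level-set induction above is the route to write out in full.

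The main obstacle is the geometric base case $m=2$: showing that if $C_{y_1}$ and $C_{y_2}$ are disjoint compact convex subsets of $X$ then some convex combination $y_t$ of $y_1,y_2$ has $A_{y_t}$ empty. This is where all three hypotheses (compactness of $X$, quasi-convexity in $x$, upper semicontinuity and quasi-concavity in $y$) get used simultaneously, and it requires a careful connectedness/compactness argument on the parameter $t\in[0,1]$ — specifically, partitioning $[0,1]$ into the closed sets $\{t:A_{y_t}\cap C_{y_1}\ne\emptyset\}$ and $\{t:A_{y_t}\cap C_{y_2}\ne\emptyset\}$ (closedness uses semicontinuity), observing they cover $[0,1]$, and concluding they must overlap by connectedness, which produces a $t$ and an $x$ with $f(x,y_1)\le\alpha$ and $f(x,y_2)\le\alpha$, hence $x\in C_{y_1}\cap C_{y_2}$ by quasi-concavity in $y$ — contradiction.
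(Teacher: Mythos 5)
The paper does not prove this statement at all: Sion's theorem is imported as a black box with the citation \cite{Sion}, and only its consequence is used (in the proof of Theorem \ref{thm:pi+dual}, with $X=\{A\in\sym^n:\norm{A}_{1,1}\le 1\}$ compact convex, $Y=\psd^n$ convex, and $f$ bilinear). So there is no in-paper argument to compare against; your observation that citing \cite{Sion} is the legitimate move here, and that for the paper's actual application a finite-dimensional concave--convex (indeed bilinear) minimax theorem would suffice and could be derived from separation, is correct and worth keeping.

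As a self-contained proof sketch of Sion's theorem itself, your outline follows the standard Komiya/Sion level-set route and the architecture (trivial inequality $v^-\le v^+$, attainment of the minima via lower semicontinuity on the compact $X$, finite intersection property, induction on $m$ with a two-point base case settled by a connectedness argument on the segment $[y_1,y_2]$) is the right one. The one genuine gap is your closedness claim for the sets $I_i=\{t: A_{y_t}\cap C_{y_i}\neq\emptyset\}$ at the single level $\alpha$: if $t_n\to t$ with witnesses $x_n\in A_{y_{t_n}}\cap C_{y_1}$ and $x_n\to x$, lower semicontinuity of $f(\cdot,y_1)$ gives $x\in C_{y_1}$, but nothing in the hypotheses lets you pass from $f(x_n,y_{t_n})\le\alpha$ to $f(x,y_t)\le\alpha$, since you have lsc only in $x$ for fixed $y$ and usc only in $y$ for fixed $x$, not joint semicontinuity. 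The classical fix is to interpose a second level $\beta$ with $\alpha<\beta<\min_x\max\{f(x,y_1),f(x,y_2)\}$ and to define the partition using the $\beta$-sublevel sets of $y_1,y_2$ (namely $I_i=\{t: \{x:f(x,y_t)\le\alpha\}\subset\{x:f(x,y_i)\le\beta\}\}$); closedness then follows from upper semicontinuity of $f(x,\cdot)$ alone, via openness of $\{s:f(x,y_s)<\beta\}$, together with connectedness of the convex sublevel sets. You correctly flagged the base case as the delicate point, but as written the single-level version of the argument does not close. A second, minor imprecision: in your final step, $f(x,y_1)\le\alpha$ and $f(x,y_2)\le\alpha$ already put $x$ in $C_{y_1}\cap C_{y_2}$ by definition; quasi-concavity in $y$ is what gives the covering $A_{y_t}\subset C_{y_1}\cup C_{y_2}$, not that last implication.
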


\begin{proof}[Proof of Theorem \ref{thm:pi+dual}]
    Note that $\norm{T+Y}_{\infty,\infty} = \norm{T+Y}_{1\to\infty}$. By $(\ell_\infty^n,\ell_1^n)$ duality\footnote{One can also establish that the adjoint ideal of (self-adjoint) $1$-nuclear operators is the space of (self-adjoint) bounded operators. See \cite{DJT95}.},
    we have
    \begin{equation*}
        \norm{T+Y}_{\infty,\infty} = \max\set{\tr((T+Y)A) : A \in \sym^n,\,  \norm{A}_{1,1} \leq 1}
    \end{equation*}

    The function $f_T: (Y,A) \mapsto \tr((T+Y)A)$ is affine on $\sym^n\times \sym^n$, the set $\set{A \in \sym^n : \norm{A}_{1,1} \leq 1}$ is compact and convex, and the set $\set{Y: Y \in \psd^n}$ is convex. By Sion's Theorem \ref{thm:Sion-minmax} and the fact that $\inf (-f_T) = \sup f_T$, we have
    \begin{equation*}
        \begin{split}
            \inf_{Y \in \psd^n} \max_{\norm{A}_{1,1} \leq 1}\tr((T+Y)A)
            &= \max_{\norm{A}_{1,1} \leq 1}\inf_{Y \in \psd^n} \tr((T+Y)A) 
            \\
            &= \max_{\norm{A}_{1,1}\leq 1} \brac{
            \tr(TA) + \inf_{Y\in\psd^n}\tr(YA)
            }
            \\
            &= \max_{\norm{A}_{1,1}\leq 1,\, A \in \psd^n}\brac{\tr(TA) + \inf_{Y\in\psd^n}\tr(YA)}\\
            &= \max_{\norm{A}_{1,1}\leq 1,\, A \in \psd^n}\tr(TA) = \pi_+(T).
        \end{split}
    \end{equation*} 

    To see that the infimum can be replaced by the minimum, we fix $T$ and let $Y_j$ be a sequence in $\psd^n$ such that $\norm{T+Y_j}_{\infty,\infty} \to \pi_+(T)$ as $j \to \infty$.
    Since $Y_j$ is bounded in $\norm{\cdot}_{\infty,\infty}$, we may extract a subsequence, still denoted by $Y_j$, such that $Y_j \to Y_0$ entry-wise, and $\norm{T+Y_0}_{\infty,\infty} = \pi_+(T)$. Since $\psd^n$ is closed, we have $Y_0 \in \psd^n$.
\end{proof}

\section{The infinite-dimensional problem}
\label{sect:infinite-dim problem}

Recall that $c_0$ denotes the vector space of sequences that converge to 0. We prove a proposition that justifies the abuse of notation for the rest of this paper.  

\begin{proposition}\label{prop:infinity-->one}
    Let $A \in \L(\ell_2,\ell_2)$ satisfy $\norm{A}_{1,1} < \infty$. Then the induced action
    \begin{equation}
        \widetilde{A}: b\mapsto \sum_{k = 1}^{\infty} b(k)A\delta_k
        \label{eq: induced action}
    \end{equation}
    defines a bounded operator $\ell_\infty \to \ell_q$ for all $q \in [1,\infty)$ with $\|\widetilde{A}\|_{\infty \to q} \leq \norm{A}_{1,1}$. In addition, $\widetilde{A} : c_0 \to \ell_q$ is nuclear for all $q \in [1,\infty)$.
\end{proposition}

\begin{proof}
    Let $b \in \ell_\infty$. If $q = 1$, we can directly estimate 
    \begin{equation*}
        \norm{\sum_k b(k)A\delta_k}_1 \leq \sum_k \abs{b(k)}\norm{A\delta_k}_1 = \sum_k \abs{b(k)}\sum_l\abs{\jbrac{A\delta_k,\delta_l}} \leq \norm{A}_{1,1}\norm{b}_{\infty}.
    \end{equation*}
    Suppose $q > 1$. Let $x \in \ell_q^* \simeq \ell_{q^*}$ where $1/q + 1/q^* = 1$. We can write $x = \sum_{k} x(k)\delta_k$ and estimate
    \begin{equation*}
        \begin{split}
            \abs{
            \jbrac{\sum_k b(k)A\delta_k, x}
            }
            &= 
            \abs{\jbrac{\sum_k A\delta_k, \sum_l x(l)\delta_l}}
            \leq \sum_{k,l}\abs{b(k)x(l)}\abs{\jbrac{A\delta_k,\delta_l}} \leq \norm{A}_{1,1}\norm{b}_{\infty}\norm{x}_{q^*}.
        \end{split}
    \end{equation*}
    Since $x$ is arbitrary, we can conclude that $\sum_k b(k)A\delta_k \in \ell_q$, and $\|\widetilde{A}\|_{\infty \to q} \leq \norm{A}_{1,1} < \infty$.

    Finally, the assumption $\norm{A}_{1,1} < \infty$ also implies $\norm{A}_{q,1} < \infty$ for all $q \in [1,\infty)$. Thanks to Proposition \ref{prop:nuclear-vs-entry} and the argument above, we can conclude that $\widetilde{A} : c_0 \to \ell_q$ is nuclear for all $q \in [1,\infty)$. 
\end{proof}

\textbf{Convention.} For simplicity, we will abuse notation and write $A$ instead of $\widetilde{A}$ for the rest of this paper without further referring to the action \eqref{eq: induced action} (which is nothing more than the standard matrix multiplication in the finite-dimensional setting).

We now define the key objects of this section:
\begin{equation}
    \begin{split}
        \V &:= \set{A \in \L(\ell_2, \ell_2) : A = A^*,\, \norm{A}_{1,1} = \sum_{k,l = 1}^{\infty}\abs{\jbrac{A\delta_k,\delta_l}} < \infty} \text{ and }\\
        \Gamma:&= \set{A \in \V : A\succ 0 \text{ as a bilinear form on $\ell_2$}}.
    \end{split}
    \label{eq:def:Gamma}
\end{equation}
By Proposition \ref{prop:infinity-->one}, $\V$ is the Banach space of all nuclear operators $c_0 \to \ell_1$ that are self-adjoint as a bilinear form on $\ell_2$ (or $c_0$), and $\Gamma$ is the positive semidefinite cone in $\V$. 

We define $\gp$ accordingly: For $A \in \Gamma$, we set
\begin{equation*}
    \gp(A) := \inf\set{\sum_{k = 1}^\infty \norm{x_k}_1^2: \, A = \sum_{k = 1}^{\infty} x_kx_k^* \text{ strongly }}
\end{equation*}
with the convention that $\inf \void = \infty$ and we define
\begin{equation}
    \Gamma_+:= \set{A \in \Gamma: \gp(A) < \infty}.
    \label{eq:def:Gamma+}
\end{equation}

Note that the condition $\sum_{k}\norm{x_k}_1^2 < \infty$ implies that the convergence of the series $\sum_{k= 1}^{N}x_kx_k^*$ to $A$ can be with respect to the nuclear topology on $c_0 \to \ell_1$, and hence, every other coarser topology such as the operator norm topology. This is because, in view of Proposition \ref{prop:nuclear-vs-entry},
    \begin{equation}
        \g\brac{A - \sum_{k = 1}^Nx_kx_k^*} \leq \gp\brac{A - \sum_{k = 1}^N x_kx_k^*} \leq \sum_{k = N+1}^{\infty} \norm{x_k}_1^2 \to 0
        \text{ as } N \to \infty.
    \end{equation}

\begin{remark}
    Heil and Larson \cite{HeilLarson08} refer to operators from $\Gamma_+$ as {\em Type B} with respect to the canonical orthonormal $\ell_2$ basis $\set{\delta_k}$.  
\end{remark}
\begin{remark}\label{rem:gamma+subadditive}
    As in the finite-dimensional setting, $\gp$ is subadditive and positive-homogeneous on $\Gamma_+$, namely, $\gp(A+B)\leq \gp(A) + \gp(B)$ and $\gp(cA) = c \cdot \gp(A)$ for all $A,B \in \Gamma_+$ and $c \geq 0$.
\end{remark}

\subsection{A negative answer to Problem \ref{prob:Feichtinger}}

\begin{remark}\label{rem:BMS24}
    In \cite{BMS24}, the authors showed that 
there exists some universal constant $c > 0$ such that the following holds. 
    Given $n \in \mathbb{N}$, there exists a matrix $T_n \in \sym^n$ such that
    \begin{equation*}
        \frac{\pi_+(T_n)}{\rho_1(T_n)} \geq c\sqrt{n}. 
    \end{equation*}
\end{remark}
As a consequence, we have the following theorem.

\begin{theorem}\label{thm:Feichtinger->matrix}
   There exists $A \in \Gamma$ but $A \notin \Gamma_+$.
\end{theorem}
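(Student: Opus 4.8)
The plan is to assemble an infinite-dimensional operator out of the finite-dimensional counterexamples furnished by Remark \ref{rem:BMS24} (equivalently, via Theorem \ref{thm:equivalence:pi-gamma}, the matrices $A_n \in \psd^n$ with $\gp(A_n)/\norm{A_n}_{1,1} \geq c\sqrt n$), placed in mutually orthogonal coordinate blocks, and then to choose scaling coefficients so that the $\norm{\cdot}_{1,1}$-norms are summable (hence the block-diagonal operator lies in $\Gamma$) while the $\gp$-values are not (hence it lies outside $\Gamma_+$). Concretely, for each $n$ let $A_n$ be a matrix realizing $\gp(A_n) \geq c\sqrt n\,\norm{A_n}_{1,1}$; normalizing, we may assume $\norm{A_n}_{1,1} = 1$, so $\gp(A_n) \geq c\sqrt n$. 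Pick a sequence of disjoint finite index sets $I_n \subset \mathbb N$ with $|I_n| = n$, pick positive weights $t_n$ with $\sum_n t_n < \infty$ but $\sum_n t_n\sqrt n = \infty$ (e.g. $t_n \asymp n^{-3/2}(\log n)^{-2}$), and set
\begin{equation*}
    A := \bigoplus_{n} t_n A_n,
\end{equation*}
i.e. the operator on $\ell_2$ whose restriction to the block spanned by $\{\delta_k\}_{k \in I_n}$ is $t_n A_n$ and which annihilates each block's orthogonal complement.

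First I would verify $A \in \Gamma$. Self-adjointness and positive-semidefiniteness (as a bilinear form on $\ell_2$, and on $c_0$) are immediate from the block structure since each $t_n A_n \succeq 0$. For the entrywise norm, $\norm{A}_{1,1} = \sum_n t_n \norm{A_n}_{1,1} = \sum_n t_n < \infty$ by the choice of weights; by Proposition \ref{prop:infinity-->one} this already places $A$ in $\Gamma$ and makes $\Psi_A : c_0 \to \ell_1$ nuclear.

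Next I would show $A \notin \Gamma_+$, i.e. $\gp(A) = \infty$. The key point is a lower bound relating $\gp$ of a block-diagonal operator to the sum of the $\gp$'s of the blocks: if $\Psi_A = \sum_k x_k x_k^*$ strongly with $\sum_k \norm{x_k}_1^2 < \infty$, then compressing to the block $I_n$ via the coordinate projection $P_n$ (which is $\ell_1$-norm-contractive and commutes with $\Psi_A$ in the sense that $P_n \Psi_A P_n = t_n A_n$ as an operator on that block) gives $t_n A_n = \sum_k (P_n x_k)(P_n x_k)^*$, whence $t_n \gp(A_n) \leq \sum_k \norm{P_n x_k}_1^2$. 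Summing over the disjoint blocks, $\sum_n \norm{P_n x_k}_1^2 \leq \norm{x_k}_1^2$ for each $k$ (disjointness of supports, combined with $\sum_n \|P_n x_k\|_1 \le \|x_k\|_1$), so $\sum_n t_n \gp(A_n) \leq \sum_k \norm{x_k}_1^2$. Since $\sum_n t_n \gp(A_n) \geq c\sum_n t_n \sqrt n = \infty$, no such factorization exists, i.e. $\gp(A) = \infty$, so $A \in \Gamma \setminus \Gamma_+$. I would also observe that $A$ is actually a limit of the truncations $\bigoplus_{n \le N} t_n A_n$, which reside in finite-dimensional blocks, to underline that the pathology is genuinely infinite-dimensional.

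The main obstacle I anticipate is the block-wise lower bound $t_n\gp(A_n) \le \sum_k \|P_n x_k\|_1^2$ and the subsequent summation over blocks: one must handle countably infinite factorizations carefully, justify that the compression of a strongly convergent rank-one series is again a strongly convergent rank-one series on the block, and control the interaction of the $\ell_1$ norms across disjoint supports (the inequality $\sum_n\|P_n x\|_1^2\le (\sum_n\|P_n x\|_1)^2\le\|x\|_1^2$ is where disjointness is used). A secondary subtlety is that the finite-dimensional $\gp(A_n)$ must be compared with the infinite-dimensional $\gp$ of the embedded block $t_nA_n$ viewed inside $\ell_2$; this is a routine identification since any factorization of the embedded operator, projected back to the block, yields a finite-dimensional factorization of $A_n$ with no larger cost. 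Everything else — summability of the weights, the arithmetic $\sum t_n\sqrt n=\infty$ — is elementary.
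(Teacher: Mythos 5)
Your proposal follows essentially the same route as the paper's proof: assemble the finite-dimensional counterexamples from Remark \ref{rem:BMS24}/Theorem \ref{thm:equivalence:pi-gamma} into a block-diagonal operator with summable $\norm{\cdot}_{1,1}$ weights, then show any purported factorization of $\Psi_A$ compresses (via the block projections $P_n$) to a factorization of each $t_nA_n$ whose $\ell_1^2$-cost is controlled by the disjoint-support inequality $\sum_n\norm{P_nx}_1^2\leq\norm{x}_1^2$, and derive a divergent lower bound $\sum_n t_n\gp(A_n)$. This matches the paper's argument (which uses $t_n=n^{-3/2}$) step for step.

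One concrete slip: the parenthetical example of weights does not satisfy your own stated criteria. With $t_n\asymp n^{-3/2}(\log n)^{-2}$, you get $\sum_n t_n\sqrt{n}\asymp\sum_n n^{-1}(\log n)^{-2}<\infty$ (this is a standard convergent series), so the lower bound on $\gp(A)$ would be finite and the argument would \emph{not} conclude. The correct choice in the paper, $t_n=n^{-3/2}$, does satisfy both conditions: $\sum_n n^{-3/2}<\infty$ while $\sum_n n^{-3/2}\sqrt{n}=\sum_n n^{-1}=\infty$. Since you stated the two summability criteria explicitly, this is a trivially repaired numerical slip rather than a flaw in the method, but as written the displayed example is wrong and should be replaced.
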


\begin{proof}
    By Theorem \ref{thm:equivalence:pi-gamma} and Remark \ref{rem:BMS24}, for every $n$, there exists a matrix $A_n \in \psd^n$ with 
    \begin{equation*}
        \norm{A_n}_{1,1} = 1
        \text{ but }
        \gp(A_n) \geq \frac{c\sqrt{n}}{2}
    \end{equation*}
    with $c$ as in Remark \ref{rem:BMS24}.
    
    Consider the infinite block-diagonal matrix defined by
    \begin{equation*}
        A:= \bigoplus_{n = 1}^\infty n^{-3/2}A_n.
    \end{equation*}
    It follows that $A \in \Gamma$. 
    
   For $n \in \mathbb{N}$, let $P_n $ denote the orthogonal projection onto the range of $A_n$. We may then write
    \begin{equation*}
        A = \sum_{m,n = 1}^\infty P_m A P_n = \sum_{m,n = 1}^\infty \mathbf{1}_{m=n}\cdot P_mAP_n = \sum_{n = 1}^\infty P_n A P_n,
    \end{equation*}
    convergent in the strong topology. 
    
     Let $(h_k) \subset \ell_1$ be any decomposition of $A = \sum_k h_kh_k^*$ in the strong topology. Then
     \begin{equation*}
         A = \sum_{n=1}^\infty P_n\brac{\sum_{k = 1}^\infty h_k h_k^* } P_n = \sum_{n,k=1}^\infty (P_nh_k)(P_nh_k)^*
     \end{equation*}
     and
     \begin{equation*}
         n^{-3/2}A_n = \sum_{k = 1}^\infty (P_nh_k)(P_nh_k)^*.
     \end{equation*}
    Note that
    \begin{equation*}
        \sum_{n = 1}^\infty\norm{P_nh_k}_1^2 = \sum_{n = 1}^\infty \sum_{i,j = 1}^\infty \abs{(P_nh_k)(i)}\abs{(P_nh_k)(i)} \leq 
        \sum_{i,j = 1}^\infty \abs{h_k(i)}\abs{h_k(j)} = 
        \norm{h_k}_1^2.
    \end{equation*}
    As a consequence,
    \begin{equation*}
        \sum_{k=1}^\infty \norm{h_k}_1^2 \geq \sum_{n,k = 1}^\infty \norm{P_nh_k}_1^2 \geq \sum_{n=1}^\infty\frac{1}{n^{3/2}}\gp(A_n) \geq 
        \frac{c}{2}\sum_{n = 1}^\infty \frac{1}{n} = \infty.
    \end{equation*}
\end{proof}

\begin{remark}
    As seen in the proof, the counterexample in Theorem \ref{thm:Feichtinger->matrix} can be made explicit if the constant $C_n$ in \eqref{eq: Cn BMS} can be explicitly realized (or approximately realized) by a sequence of matrices $A_n$ for all large $n$. This shall not be taken for granted, however, as the construction in \cite{BMS24} is given in the adjoint (Remark \ref{rem:BMS24}). It is not clear to the authors how to find such $A_n$'s.
\end{remark}

\subsection{Topology and Density}
\label{sect:topology and density}
\newcommand{\U}{\mathcal{U}}

Let $\tau$ be the topology on $\V$ generated by open sets of the form $T + \U_\eps$ for $\eps > 0$,
where $T \in \V$ and 
\begin{equation}
    \U_\eps := \set{A \in \Gamma_+ : \gp(A) < \eps}.
    \label{eq:tau-topology-U_eps}
\end{equation}
Intuitively, $\tau$ measures ``approximation from above''.
In view of Theorem \ref{thm:Feichtinger->matrix}, we see that
\begin{equation*}
    \U_\eps \subsetneq \set{A \in \Gamma_+: \norm{A}_{1,1} < \eps} \subsetneq \set{A \in \V: \norm{A}_{1,1} < \eps}.
\end{equation*}

\begin{proposition}\label{prop:topology, basis, non-second countability}
    The following are true about $\tau$.
    \begin{enumerate}[(A)]
        \item $\tau$ is Hausdorff, translation invariant, and first-countable.
        \item The family $\mathcal{F} = \set{T+ \U_\eps: T \in \V, \eps > 0}$ forms a basis for $\tau$, i.e., every open set in $\tau$ can be written as a union of elements in $\mathcal{F}$.
        \item If $A\in \mathcal O\in\tau$ then there exists $r>0$ so that $A+\U_r\subset \mathcal O$. In particular,   $\tau$ is not the discrete topology on $\V$.
        \item Let $A_0\in\Gamma\setminus\Gamma_+$ (whose existence is guaranteed by Theorem \ref{thm:Feichtinger->matrix}). Then the subspace $\{tA_0;t\in\R\}$ with the induced topology is discrete.
        \item $\tau$ is not second-countable.
    \end{enumerate}
\end{proposition}

\begin{proof}
    For (A), it is clear that $\tau$ is Hausdorff and translation-invariant. The family $\set{\U_{q}:q \in \mathbb{Q},q>0}$  
    gives a local countable base at $0 \in \V$, so $\tau$ is first-countable. 
    
    We turn to (B). Since $\tau$ is generated by $\mathcal{F}$, it suffices to show that the intersection of two open sets in $\mathcal{F}$ can be written as a union of open sets in $\mathcal{F}$. To this end, suppose $T_i + \U_{\eps_i} \in \mathcal{F}$ for $i = 1, 2$ and they have nonempty intersection. Pick any
    \begin{equation*}
        T_0 \in (T_1 + \U_{\eps_1})\cap(T_2 + \U_{\eps_2}).
    \end{equation*}
    This means that for each $i$, there exists $A_i \in \U_{\eps_i}$ such that $T_0 = T_i + A_i$. Set
    \begin{equation*}
        \eps_0 = \frac{1}{2}\min\set{\eps_i - \gp(A_i)}.
    \end{equation*}
    For any $E \in \U_{\eps_0}$, we have
    \begin{equation*}
        \gp(A_i + E)\leq \gp(A_i)+\gp(E) < \gp(A_i) + \eps_0 < \eps_i,
    \end{equation*}
    so $A_i + \U_{\eps_0}\subset \U_{\eps_i}$ for each $i$. Therefore,
    \begin{equation*}
        T_0 + \U_{\eps_0} \subset (T_0 - A_i)+ \U_{\eps_i} = T_i + \U_{\eps_i}
        \text{\quad for $i=1,2$.}
    \end{equation*}
    The conclusion follows since $T_0$ is arbitrary. 

    We turn to Part (C). Thanks to part (B), the open set $\mathcal O$ can be written as $\mathcal O=\cup_{i\in I}(B_i+\U_{r_i})$ for some index set $I$, where $B_i\in\V$ and $r_i>0$. Since $A\in \mathcal \mathcal O$, there is $i_0\in I$ so that $A\in B_{i_0}+\U_{r_{i_0}}\subset \mathcal \mathcal O$. Thus $A=B_{i_0}+C$ for some $C\in\Gamma_+$, with $\gamma_+(C)<r_{i_0}$. Let $r:=r_{i_0}-\gamma_+(C)>0$. We claim that $A+\U_r\subset B_{i_0}+\U_{r_{i_0}}$, and hence, $A+\U_r\subset \mathcal \mathcal O$.  
    To see this, let $E\in \U_{r}$. We have $\gamma_+(E)<r$ and $\gamma_+(C+E)\leq \gamma_+(C)+\gamma_+(E)<r_{i_0}$. Thus $A+E=B_{i_0}+C+E\in B_{i_0}+\U_{r_{i_0}}$. Therefore, singletons are not open sets, $\{A\}\not\in\tau$, and $\tau$ is not the discrete topology on $\V$.

    We now turn to (D) and (E). Thanks to Theorem \ref{thm:Feichtinger->matrix}, we can find and fix
    \begin{equation}
        A_0 \in \Gamma\setminus \Gamma_+ \text{, i.e., $A_0 \in \V$ but $\gp(A_0) = \infty$.}
        \label{eq:second-countable bad element}
    \end{equation}
    For any $s \neq t \in \R$, $tA_0 \notin sA_0 + \U_\eps$ for any $\eps$, since either $t-s < 0$ or
    \begin{equation*}
        \gp((t-s)A_0) = (t-s)\cdot\gp(A_0) = (t-s)\cdot \infty = \infty
    \end{equation*}
    by \eqref{eq:second-countable bad element}. 
    Consider the uncountable set $S = \set{tA_0 : t \in \R}\subset \V$ equipped with the subspace topology induced by $\tau$. By the argument above, for each $tA_0 \in S$, there exists a basic open set $tA_0 + \U_1 \in \mathcal{F}$ containing $tA_0$ such that
    \begin{equation*}
        (tA_0 + \U_1)\cap S = \set{tA_0},
    \end{equation*}
    i.e., $tA_0$ is isolated in the subspace topology. Thus,
    $S$ discrete in the subspace topology and not second-countable. Since every subspace of a second-countable space must be second-countable, $\tau$ is not second-countable.
    
\end{proof}

\begin{remark}
    The space $(\V,\tau)$ is {\em not} a topological vector space. There is no balanced neighborhood of zero, and scalar multiplication is not continuous at zero either. 
\end{remark}

\begin{theorem}\label{thm:density}
    Let $\V$, $\Gamma$, and $\Gamma_+$ be as in \eqref{eq:def:Gamma} and \eqref{eq:def:Gamma+}. Then
    \begin{enumerate}[(A)]
        \item $\Gamma_+$ is dense in $\Gamma$ with respect to $\tau$,
        \item $\Gamma$ is the the $\tau$-closure of $\Gamma_+$, and
        \item $\Gamma_+$ is reproducing, namely, $\V = \Gamma_+ - \Gamma_+$.  
    \end{enumerate}
    
\end{theorem}

\begin{proof}
    For Part (A), it suffices to show that for any $A \in \Gamma$ and $\eps > 0$, we can find operators $B,C \in \Gamma_+$ with $A = B-C$ and $\gp(C) \leq \eps$.
    To this end, fix $A \in \Gamma$ and $\eps > 0$. If $\norm{A}_{1,1} = 0$, then $A = 0 \in \Gamma\cap \Gamma_+$. Without loss of generality, we may assume that $\norm{A}_{1,1} = 1$ for the rest of the proof. 
    
    Let $N$ be sufficiently large such that
    \begin{equation}
        \sum_{k,l = 1}^{N}\abs{\jbrac{A\delta_k,\delta_l}} \geq 1-\frac{\epsilon}{2}.
        \label{eq:thm:density-1}
    \end{equation}
    Let $P_N:\ell_2 \to \ell_2$ be the orthogonal projection onto the first $N$ entries,
    and define the truncated operator $A_N :\ell_2 \to \ell_2$ by
    \begin{equation*}
        A_Nx = \sum_{k = 1}^{N}x(k)P_NA\delta_k.
    \end{equation*}
    It follows that $A_N \succ 0$ and
    \begin{equation*}
        \norm{A_N}_{1,1},\, \gp(A_N) < \infty.
    \end{equation*}

    Let $R = A - A_N$ be the residual operator. Note that $R$ is self-adjoint but not necessarily positive-semidefinite. It follows from \eqref{eq:thm:density-1} that
    \begin{equation}
        \norm{R}_{1,1} \leq \frac{\epsilon}{2}.
        \label{eq:thm:density-2}
    \end{equation}
    Let $r_k := R\delta_k$. Thanks to Proposition \ref{prop:infinity-->one}, $r_k \in \ell_1 \subset \ell_2$.
    We may then write $R$ as the strongly convergent series
    \begin{equation}
        R =  \sum_{k = 1}^\infty r_k\delta_k^* = \sum_{k = 1}^\infty \delta_kr_k^* =  \sum_{k=1}^{\infty}\frac{1}{2}(r_k\delta_k^* + \delta_kr_k^*) = \sum_{k = 1}^{\infty} f_kf_k^* - g_kg_k^*
        \label{eq:thm:density-3}
    \end{equation}
    where
    \begin{equation}
        f_k = \frac{1}{2}\brac{ \frac{r_k}{\sqrt{\norm{r_k}_1}} + \sqrt{\norm{r_k}_1}\delta_k }
        \text{ and }
        g_k =  \frac{1}{2}\brac{ \frac{r_k}{\sqrt{\norm{r_k}_1}} -\sqrt{\norm{r_k}_1}\delta_k }
        \label{eq:thm:density-4}
    \end{equation}
   It follows from \eqref{eq:thm:density-2}--\eqref{eq:thm:density-4} that
   \begin{equation}
       \sum_{k=1}^{\infty} \norm{f_k}_1^2 + \norm{g_k}_1^2 \leq 2\sum_{k=1}^{\infty}\norm{r_k}_1 = 2\norm{R}_{1,1} \leq \epsilon.
       \label{eq:thm:density-5}
   \end{equation}
   If we set 
   \begin{equation*}
       B = A_N + \sum_{k=1}^{\infty} f_kf_k^*
       \text{ and } C = \sum_{k = 1}^\infty g_kg_k^*,
   \end{equation*}
   we see that $B,C \succ 0$,
   $A = B- C$. Moreover, by Remark \ref{rem:gamma+subadditive}, 
   \begin{equation*}
       \gp(B) \leq \gp(A_N) + \sum_{k = 1}^{\infty}\norm{f_k}_1^2 < \infty.
   \end{equation*}
   Thanks to \eqref{eq:thm:density-5}, we also have
   \begin{equation*}
       \gp(C) \leq \sum_{k = 1}^{\infty}\norm{g_k}_1^2 \leq \epsilon
   \end{equation*}
   This proves Part (A).

   For Part (B), it suffices to show that $\V \setminus \Gamma$ is open in $\tau$. If $S \in \V \setminus \Gamma$, then that there exists $v \in \ell_2$ with $\norm{v}_2 = 1$ and $\jbrac{Sv,v} = -\alpha < 0$. 
   % We note that $v \in \ell_1$, since\begin{equation*}\norm{v}_1 = \alpha^{-1}\norm{\alpha v}_1 \leq \alpha^{-1}\norm{S}_{2\to 1}\norm{v}_2\leq \alpha^{-1}\norm{S}_{1,1} < \infty\end{equation*}
   Let $\eta > 0$ be sufficiently small, such that any $E \in \U_\eta$ (see \eqref{eq:tau-topology-U_eps}) must satisfy $\jbrac{Ew,w} < \frac{\alpha}{2}\norm{w}_2^2$ for all $w \in \ell_2$. It follows that $S + \U_\eta \subset \V\setminus\Gamma$.

   For Part (C), we may invoke Proposition \ref{prop:nuclear-vs-entry} and use the same decomposition technique in \eqref{eq:thm:density-3} to show that any operator $T \in \V $ can be written as $T = W-Z$ with $W,Z \in \Gamma_+$. We omit the details here.
\end{proof}

\begin{remark}
    As a corollary of Theorem \ref{thm:density}(A), we see that $\Gamma_+$ is dense in $\Gamma$ in the $\norm{\cdot}_{1,1}$-topology. However, this statement is more or less trivial, since $\Gamma $ consists of nuclear operators from $c_0 \to \ell_1$ so they are the $\norm{\cdot}_{1,1}$-limit of finite-rank operators, and finite-rank operators always admit a factorization satisfying \eqref{eq:condition-strongl2-intro}.
\end{remark}

\subsection{Action under positive-coefficient Wiener subalgebra}
\label{sect:holomorphic action}

The main result of this section is the following.

\begin{theorem}\label{thm:holomorphic action}
    Let $f(z) = \sum_{k=1}^\infty c_kz^k $ with $c_k \geq 0$ for all $k$ and $\sum_{k=1}^\infty c_k < \infty $. Let $g(z) = \tfrac{f(z)}{z} = \sum_{k=0}^\infty c_{k+1}z^k$. Then for every $A \in \Gamma_+$, we have $g(A) \in \Gamma_+$ with
    \begin{equation}
        \gp(f(A)) \leq g(\norm{A}_{1,1}) \cdot \gp(A).
        \label{eq:holomorphic-0}
    \end{equation}
\end{theorem}

We will first present two key ingredients, Lemmas \ref{lem:square root lp to l1} and \ref{lem:L1 endo}, which are of independent interest.

The celebrated Grothendieck-Pietsch factorization theorem implies that that any operator $A \in \L(\ell_\infty,\ell_1)$ factors through $\ell_2$ (see, e.g., \cite[Corollary 2.2]{pisier2012grothendieck}). The following lemma (with $p = 2$) can be viewed as an explicit variant of this classical result given the additional assumption that $A$ is self-adjoint and positive.

\begin{lemma}\label{lem:square root lp to l1}
    Let $A \in \Gamma$ and let $A^{1/2}:\ell_2 \to \ell_2$ be the square root of $A : \ell_2 \to \ell_2$. Then for any $1 \leq p \leq 2$, $A^{1/2} \in \L(\ell_p, \ell_1)$ with 
    \begin{equation*}
     \norm{A^{1/2}}_{p\to 1} \leq \norm{A}_{1,1}^{1/2}. 
    \end{equation*}
    In particular, we have $A^{1/2}:\ell_2 \to \ell_1$, and by self-adjointness, $A^{1/2}: \ell_\infty \to \ell_2$, so any $A\in \Gamma$ factors through $\ell_2$.
\end{lemma}

\begin{proof}

Let $c_{00}$ denote the space of sequences with finite support. For any $z \in \ell_1$, we have
\begin{equation*}
    \norm{z}_1 = \sup\limits_{w \in c_{00}, \norm{w}_\infty \leq 1}\jbrac{z,w}.
\end{equation*}
% Also note that $c_{00}$ is $\norm{\cdot}_p$-dense in $\ell_p$ for all $1 \leq p < \infty$.
Let $x \in \ell_p \subset \ell_2$ and let $y \in c_{00} \subset \ell_2$. We have
\begin{equation*}
    \abs{\jbrac{A^{1/2}x,y}} = \abs{\jbrac{x,A^{1/2}y}} \leq \norm{x}_2\norm{A^{1/2}y}_2 = \norm{x}_2\sqrt{\jbrac{A^{1/2}y, A^{1/2}y}} = \norm{x}_2\jbrac{Ay,y}^{1/2}.
\end{equation*}
On the other hand,
\begin{equation*}
    \jbrac{Ay,y} \leq \sum_{i,j=1}^{\infty}\abs{A(i,j)}\abs{y(i)}\abs{y(j)}
    \leq \norm{A}_{1,1}\norm{y}_{\infty}^2. 
\end{equation*}
Combining the two estimates above and using $\norm{x}_2 \leq \norm{x}_p$ whenever $p\in [1,2]$, we have
\begin{equation*}
    \abs{\jbrac{A^{1/2}x, y}} \leq \norm{A}_{1,1}^{1/2}\norm{x}_p\norm{y}_\infty.
\end{equation*}
% The conclusion follows since $x$ and $y$ are arbitrary.
Therefore,
\begin{equation*}
    \norm{A^{1/2}}_{p\to 1} = \sup_{x \in \ell_p, \norm{x}_p \leq 1} \sup_{y \in c_{00}, \norm{y}_\infty \leq 1}\abs{\jbrac{A^{1/2}x,y}} \leq \norm{A}_{1,1}^{1/2}.
\end{equation*}
\end{proof}

The next lemma states that $\Gamma_+$ is invariant under the quadratic action of $\ell_1$ endomorphisms.

\begin{lemma}\label{lem:L1 endo}
    If $T \in \L(\ell_1,\ell_1)$ and $A \in \Gamma_+$, then
    Then $TAT^* \in \Gamma_+$ with
    \begin{equation}
    	\gamma_+(TAT^*) \leq \gp(A)\norm{T}_{1\to 1}^2.
    	\label{eq:L1 endo-0}
    \end{equation} 
\end{lemma}

\begin{proof}
	Let $\eps > 0$ and let $(x_k)\subset \ell_1$ be such that
    \[
    A = \sum_k x_kx_k^*
        \text{ with } \sum_k \norm{x_k}_1^2 \leq \gp(A) + \eps < \infty. 
    \]
    We have a strongly convergent factorization $TAT^* = \sum_k (Tx_k)(Tx_k)^*$
    with the property
    \[
    \sum_k \norm{Tx_k}_1^2 \leq \norm{T}_{1\to 1}^2\sum_k\norm{x_k}_1^2 \leq \gp(A)\norm{T}_{1\to 1}^2 + \eps.
    \]
    The conclusion follows since $\eps > 0$ is arbitrary. 
\end{proof}

\begin{proof}[Proof of Theorem \ref{thm:holomorphic action}]

Recall from Remark \ref{rem:gamma+subadditive} that $\gp$ is positive-homogeneous and sub-additive. Therefore, it suffices to show that 
\begin{equation}
	\gp(A^k) \leq \norm{A}_{1,1}^{k-1}\cdot \gp(A)
    \text{\quad for all $k\geq 1$.}
    \label{eq:holomorphic-induction-0}
\end{equation}
We proceed by induction.

{\em Base case:} when $k = 1$, \eqref{eq:holomorphic-induction-0} is trivial.

{\em Induction hypothesis: } suppose \eqref{eq:holomorphic-induction-0} holds for $1, \cdots, k-1$. We write $A^k = A^{1/2}A^{k-1}A^{1/2}$. By the induction hypothesis,
\begin{equation}
    \gp(A^{k-1})\leq \norm{A}_{1,1}^{k-2}\cdot \gp(A)
    \label{eq:holomorphic-induction-1}
\end{equation}
Using Lemma \ref{lem:L1 endo}, Lemma \ref{lem:square root lp to l1}, and \eqref{eq:holomorphic-induction-1}, we have
\begin{equation*}
    \gp(A^{1/2}A^{k-1}A^{1/2}) \leq \norm{A^{1/2}}_{1\to 1}^2 \cdot \gp(A^{k-1}) \leq \norm{A}_{1,1} \cdot \norm{A}_{1,1}^{k-2}\cdot \gp(A) = \norm{A}_{1,1}^{k-1}\gp(A).
\end{equation*}
This concludes the proof of Theorem \ref{thm:holomorphic action}.
\end{proof}

\subsection{Factorization into $2$-summing operators}
\label{sect:2-summing}

\newcommand{\strong}{{\mathsf{strong}}}
\newcommand{\weak}{{\mathsf{weak}}}

In this section, we provide a sufficient condition for an eigendecomposition of $A \in \Gamma$ to be $\ell_1$ norm-squared summable (i.e., a strong $\ell_2(\ell_1)$ sequence according to the definition to follow). Our criterion is stated in terms of factoring $A$ into a composition of $2$-summing operators (Theorem \ref{thm:2-summing factorization, sufficient Type A}). In addition, we give a necessary and sufficient condition for $A^{1/2}$ to be $2$-summing (Theorem \ref{thm:square root 2-summing, sufficient, square root diagonal}).

Let $1 \leq p < \infty$.
We say a sequence $(x_k)$ in a Banach space $X$ is a strong $\ell_p$ sequence if
\begin{equation*}
    \norm{(x_k)}_{\ell_p^{\strong}(X)}^p := \sum_k \norm{x_k}_X^p < \infty.
\end{equation*}
We say $(x_k)$ is a weak $\ell_p$ sequence if
\begin{equation*}
    \norm{(x_k)}_{\ell_p^{\weak}(X)}^p := \sup_{x^* \in B_{X^*}}
    \sum_k \abs{\jbrac{x^*,x_k}}^p
     < \infty.
\end{equation*}
We use $\ell_p^\strong(X)$ (and respectively, $\ell_p^\weak(X)$) to denote the Banach space of strong (and respectively, weak) $\ell_p$ sequences in $X$.

Let $X,Y$ be Banach spaces. 
A linear operator $T: X \to Y$ is called $p$-summing ($1 \leq p < \infty$) if there exists a constant $C \geq 0$ such that for any finite collection $x_1, \cdots, x_N \in X$, we have
\begin{equation}
    \sum_{k=1}^N \norm{Tx_k}_Y^p \leq C^p \cdot \sup_{x^* \in B_{X^*}}
    \sum_{k=1}^N \abs{\jbrac{x^*,x_k}}^p.
    \label{eq:p-summing def}
\end{equation}
The infimum of all $C$ for which \eqref{eq:p-summing def} always holds is denoted by $\pi_p(T) = \pi_p(T:X\to Y)$, called the $p$-summing norm of $T$. By choosing a single vector in the definition \eqref{eq:p-summing def}, we see that 
\begin{equation*}
    \norm{T}_{X\to Y} \leq \pi_p(T)
    \text{\quad for any $p \in [1,\infty)$.}
\end{equation*}
We use $\Pi_p(X,Y)$ to denote the vector space of $p$-summing operators $X \to Y$ with equipped with the norm $\pi_p(\cdot)$, under which $\Pi_p(X,Y)$ is a Banach space (moreover, a Banach ideal). Alternatively, $T$ is $p$-summing if and only if the following induced action defines a bounded linear operator 
\begin{equation}
    (x_k)_{k=1}^\infty \mapsto (Tx_k)_{k=1}^\infty \quad : \quad 
    \ell_p^\weak(X) \to \ell_p^\strong(Y)
    \label{eq:p-summing def alt}
\end{equation}
and the norm of this operator is precisely $\pi_p(T)$. 

Note that $p$-summing operators are $q$-summing whenever $1 \leq p \leq q < \infty$. If $X$ and $Y$ are both Hilbert spaces, then the $2$-summing norm coincides with the Hilbert-Schmidt norm.

\begin{theorem}\label{thm:2-summing factorization, sufficient Type A}
    If $A\in \Gamma$ admits a factorization $A = TT^*$ such that $T\in\Pi_2(\ell_2,\ell_1)$, then $A \in \Gamma_+$. Moreover, for any eigendecomposition $A = \sum_k \lambda_k e_ke_k^* $, we have
    \begin{equation*}
        \gp(A) \leq 
        \sum_{k}\norm{A^{1/2}e_k}_1^2=
        \sum_k \lambda_k\norm{e_k}_1^2 \leq \pi_2(T)^2 < \infty.
    \end{equation*}
\end{theorem}

\begin{proof}
    Let $I$ denote the identity on $\ell_2$.
    Let
    $(f_k)\subset \ell_2$ be a tight frame of $\ell_2$ with frame bound $\alpha > 0$, i.e., $I = \alpha^{-1}\sum_k f_kf_k^*$. We then have
    \begin{equation*}
        A = TT^* = TIT^* = \alpha^{-1}T\brac{\sum_k f_kf_k^*}T^* = \sum_k (\alpha^{-1/2}Tf_k)(\alpha^{-1/2}Tf_k)^*
    \end{equation*}
    with
    \begin{equation*}
        \sum_k \norm{\alpha^{-1/2}Tf_k}_1^2 \leq \pi_2(T)^2\cdot\alpha^{-1} \sup_{\norm{\phi}_2\leq 1}\sum_k\abs{\ip{\phi,f_k}}^2 \leq \pi_2(T)^2.
    \end{equation*}
    This means that $A \in \Gamma_+$.
    
    For the second part of the theorem, we may assume, without loss of generality, that $\lambda_k > 0$ for all $k$. We set
    \begin{equation*}
        u_k := \lambda_k^{-1/2}T^*e_k.
    \end{equation*}
    Notice that
    \begin{align}
        \ip{u_k,u_l} &= \lambda_k^{-1/2}\lambda_l^{-1/2}\ip{TT^*e_k,e_l} = \lambda_k^{1/2}\lambda_l^{-1/2}\ip{e_k,e_l} = \begin{cases}
            1 &\text{ if } k = l\\
            0 &\text{otherwise}
        \end{cases}\text{, and }
        \label{eq:2-summing, sufficient - 1}
        \\
        \norm{Tu_k}_1^2 &= \norm{\lambda_k^{-1/2}Ae_k}_1^2 = \lambda_k\norm{e_k}_1^2.\label{eq:2-summing, sufficient - 2}
    \end{align}
    Using \eqref{eq:2-summing, sufficient - 1}, \eqref{eq:2-summing, sufficient - 2}, and the alternative characterization \eqref{eq:p-summing def alt} of the $2$-summability of $T$, we have
    \begin{equation*}
        \gp(A) \leq \sum_k \lambda_k\norm{e_k}_1^2 = \sum_k \norm{Tu_k}_1^2 \leq \pi_2(T)^2 \cdot \sup_{\norm{\phi}_2 \leq 1}\sum_k \abs{\ip{\phi,u_k}}^2 \leq \pi_2(T)^2.
    \end{equation*}

\end{proof}

\begin{remark}
    In the language of Heil and Larson \cite{HeilLarson08}, Theorem \ref{thm:2-summing factorization, sufficient Type A} says that if an element in $\Gamma_+$ (i.e., an operator of Type B) admits a $2$-summing factorization as in the hypothesis, then it is of Type A  with respect to the canonical orthonormal $\ell_2$ basis $\set{\delta_k}$.  
\end{remark}

Recall the following version of the celebrated Grothendieck inequality (see, e.g., \cite[Theorems 3.7 and 3.11]{DJT95}).

\begin{theorem}[Grothendieck inequality]\label{thm:Grothendieck}
    $\L(\ell_\infty,\ell_2)\subset \Pi_2(\ell_\infty,\ell_2)$ with $\pi_2(T) \leq \kappa_G\norm{T}_{\infty\to 2}$, where $\kappa_G$ is the Grothendieck constant. As a consequence, $\Pi_2(\ell_2,\ell_1) = \Pi_1(\ell_2,\ell_1)$ with $\pi_2(T)\leq \pi_1(T)\leq \kappa_G\cdot \pi_2(T)$ for all $T:\ell_2 \to \ell_1$.
\end{theorem}

Using the Hilbert space structure of $\ell_2$, Paneque and Pi\~neiro \cite[Theorem 3]{paneque2000banach} showed that $\Pi_2(\ell_2,\ell_1)$ coincides with the space of nuclear operators $T:\ell_2 \to \ell_1$ and 
\begin{equation}
    \begin{split}
        \pi_2(T) &\approx \g(T:\ell_2\to \ell_1)
        \\
        &= \inf\set{\sum_k\norm{x_k^*}_2\norm{y_k}_1 : 
        T = \sum_kx_k^*\otimes y_k \text{ strongly}
        } = \norm{T^*}_{2,1}
    \end{split}
    \label{eq:P&P}
\end{equation}
for all $T:\ell_2 \to \ell_1$ up to a universal constant of equivalence. We reprove a special case of their result with explicit constants. 

\begin{proposition}\label{prop:T* nuclear implies T 2-summing}
    Let $T\in \L(\ell_2, \ell_2)$. 
    \begin{enumerate}[(A)]
        \item Suppose
    \begin{equation*}
        \norm{T^*}_{2,1} = \sum_i \brac{\sum_j \abs{\ip{T^*\delta_i, \delta_j}}^2 }^{1/2} < \infty,
    \end{equation*}
    i.e., $T^*:c_0 \to \ell_2$ is nuclear in view of Proposition \ref{prop:nuclear-vs-entry}.
    Then $T\in\Pi_2(\ell_2,\ell_1)$ with
    \begin{equation*}
        \pi_2(T)\leq \norm{T^*}_{2,1}.
    \end{equation*}
    \item Suppose $T = T^*$ and $T\in \Pi_2(\ell_2,\ell_1)$. Then $T^*:c_0 \to \ell_2$ is nuclear, with
    \begin{equation*}
        \norm{T^*}_{2,1} = \norm{T}_{2,1}  \leq \kappa_G\cdot \pi_2(T)
    \end{equation*}
    where $\kappa_G$ is the Grothendieck constant in Theorem \ref{thm:Grothendieck}.
    
    \end{enumerate}
    
\end{proposition}

\begin{proof}
    % The boundedness of $T:\ell_2\to \ell_1$ will follow immediately from the $2$-summing estimate. 

    We start with (A). Pick an arbitrary finite collection $x_1, \cdots, x_N \in \ell_2$. For each $k = 1, \cdots, x_N$, we write $x_k = \sum_j \ip{x_k,\delta_j}\delta_j$, so that $Tx_k = \sum_j \ip{x_k,\delta_j}T\delta_j$, and
    \begin{equation}
        \norm{Tx_k}_1^2 = \brac{\sum_i \abs{
        \sum_j \ip{x_k,\delta_j}\ip{T\delta_j,\delta_i}
        }}^2
        = 
        \brac{\sum_i \abs{
        \sum_j \ip{x_k,\delta_j}\ip{\delta_j,T^*\delta_i}
        }}^2
        \label{eq:T21-Txk}
    \end{equation}
    For each $i$, define a unit vector in $\ell_2$ by
    \begin{equation}
        \tau_i:= \frac{T^*\delta_i}{\norm{T^*\delta_i}_2} \text{ if $T^*\delta_i \neq 0$ and $\tau_i:= 0$ otherwise.}
        \label{eq:tau_i}
    \end{equation}
    We then continue the estimate
    \begin{equation}
        \text{\eqref{eq:T21-Txk}} = 
        \brac{\sum_i \norm{T^*\delta_i}_2\abs{\ip{x_k, \tau_i}}}^2
        \leq
        \brac{\sum_i \norm{T^*\delta_i}_2}
        \brac{\sum_i \norm{T^*\delta_i}_2\abs{\ip{x_k,\tau_i}}^2}.
        \label{eq:T21-Txk-2}
    \end{equation}
    Summing in $k$ and using \eqref{eq:T21-Txk-2}, we see that
    \begin{equation*}
        \begin{split}
            \sum_{k=1}^N\norm{Tx_k}_1^2 &\leq 
        \brac{\sum_i \norm{T^*\delta_i}_2}
        \brac{\sum_i \norm{T^*\delta_i}_2
        \sum_{k=1}^N \abs{\ip{x_k,\tau_i}}^2
        }
        \\
        &\leq 
        \brac{\sum_i \norm{T^*\delta_i}_2}^2 \sup_{\norm{\phi}_2 \leq 1}\sum_{k=1}^N \abs{\ip{x_k,\phi}}^2.
        \end{split}
    \end{equation*}
    This concludes the proof of Part (A).

    For Part (B), we use the assumption $T = T^*$ and directly estimate
    \begin{equation*}
        \begin{split}
            \norm{T^*}_{2,1} = \norm{T}_{2,1} = \sum_k\norm{T\delta_k}_2 &\leq \pi_1(T)
            \cdot\sup_{\norm{\phi}_2 \leq 1}\sum_k\abs{\ip{\delta_k,\phi}}
            \\
            &= \pi_1(T)
            \\ 
            &\leq \kappa_G\cdot \pi_2(T)
        \end{split}
    \end{equation*}
    where we use Theorem \ref{thm:Grothendieck} in the last inequality. 
    
\end{proof}

Recall from the spectral theorem that for a positive self-adjoint operator $A\in\L(\ell_2,\ell_2)$, $A^{1/2}\in \Pi_2(\ell_2,\ell_2)$ (i.e., $A^{1/2}$ is Hilbert-Schmidt) if and only if $A$ is trace-class (i.e., $\sum_k\ip{Au_k,u_k} < \infty$ for any orthonormal basis $\set{u_k}$). The following theorem gives the counterpart for operators from $\ell_2 \to \ell_1$ and characterizes a special {\em subfamily} of $\Gamma_+$.

\begin{theorem}\label{thm:square root 2-summing, sufficient, square root diagonal}
    Suppose $A \in \L(\ell_2,\ell_2)$ is positive and self-adjoint, and let
    \begin{equation*}
        D:= \sum_k\sqrt{\ip{A\delta_k,\delta_k}} \in [0,\infty].
    \end{equation*}
    \begin{enumerate}[(A)]
        \item If $D < \infty$, then $A^{1/2} \in \Pi_2(\ell_2,\ell_1)$ with $\pi_2(A^{1/2}) \leq D$. Moreover, by Theorem \ref{thm:2-summing factorization, sufficient Type A}, $A \in \Gamma_+$ with $\gp(A)\leq D^2$.
        \item If $A^{1/2}\in\Pi_2(\ell_2,\ell_1)$ so $A \in \Gamma_+$ by Theorem \ref{thm:2-summing factorization, sufficient Type A}, then $D \leq \kappa_G\cdot \pi_2(A^{1/2})$, with $\kappa_G$ being the Grothendieck constant in Theorem \ref{thm:Grothendieck}.
    \end{enumerate}
\end{theorem}

\begin{proof}
    Note that $A^{1/2}\in \L(\ell_2,\ell_2)$ is self-adjoint. 
    In view of Proposition \ref{prop:T* nuclear implies T 2-summing}, it suffices to show that
    \begin{equation*}
        \norm{(A^{1/2})^*} = \norm{A^{1/2}}_{2,1} = D.
    \end{equation*}
    To see this, we have 
    \begin{equation*}
        \norm{A^{1/2}\delta_k}_2^2 = \ip{A^{1/2}\delta_k, A^{1/2}\delta_k} = \ip{A\delta_k, \delta_k}
        \implies
        \norm{A^{1/2}\delta_k}_2 = \sqrt{\ip{A\delta_k,\delta_k}}.
    \end{equation*}
    Therefore,
    \begin{equation*}
        \norm{A^{1/2}}_{2,1} =  
    \sum_{k}
    \brac{\sum_{l} \abs{\jbrac{A^{1/2}\delta_k,\delta_l}}^{2} }^{\frac{1}{2}}
    =
    \sum_{k}\norm{A^{1/2}\delta_k}_2 = \sum_k \sqrt{\ip{A\delta_k,\delta_k}} = D. 
    \end{equation*}
\end{proof}

We conclude the paper with two remarks.

\begin{remark}
 We clarify that Theorem \ref{thm:square root 2-summing, sufficient, square root diagonal} only concerns the nuclear nature of $A^{1/2}$ and does not say anything about the eigendecomposition of $A$.
Consider the diagonal operator $A\in\L(\ell_2,\ell_2)$ given by $A\delta_k = k^{-2}\delta_k$ for $k \in \mathbb{N}$. $A$ admits an eigendecomposition $A = \sum_k (k^{-1}\delta_k)^*\otimes (k^{-1}\delta_k)$ with $\sum_k \norm{k^{-1}\delta_k}_1^2 = \sum_k k^{-2} < \infty$, so $A\in\Gamma_+$. On the other hand, $A^{1/2}= \sum_k (k^{-1/2}\delta_k)^*\otimes (k^{-1/2}\delta_k)$ and $\sum_k\sqrt{\ip{A\delta_k,\delta_k}} = \sum_k\norm{A^{1/2}\delta_k}_2 = \sum_kk^{-1} = \infty$. By Theorem \ref{thm:square root 2-summing, sufficient, square root diagonal} (or a straightforward verification using $(\delta_k)$), $A^{1/2}\notin \Pi_2(\ell_2,\ell_1)$. 
\end{remark}

\begin{remark}
    There exist positive, self-adjoint, and commuting operators $S,T \in \L(\ell_2,\ell_2)$ such that $S^{1/2}, T^{1/2} \in \Pi_2(\ell_2,\ell_1)$ (so $S,T \in \Gamma_+$ by Theorem \ref{thm:2-summing factorization, sufficient Type A}) but any eigendecomposition of $S+T$ fails to be $\ell_2^\strong(\ell_1)$, and therefore, $(S+T)^{1/2} \notin\Pi_2(\ell_2,\ell_1)$. As in \cite{BOP18}, we define positive self-adjoint operators $S,T \in \L(\ell_2,\ell_2)$ by
\begin{equation*}
    S := \bigoplus_{n=1}^{\infty}\frac{1}{n^{3}}\brac{\sum_{k=1}^n \delta_{\frac{n(n-1)}{2}+k}^*\otimes\delta_{\frac{n(n-1)}{2}+k}}
    \text{\quad and \quad}
    T := \bigoplus_{n=1}^\infty \frac{1}{n^{3+\eps}}\brac{\sum_{k=0}^{n-1}k e_{n,k}^*\otimes e_{n,k}}.
\end{equation*}
In the above $\set{e_{n,k}}_{k=0}^{n-1}$ is the Fourier orthonormal basis of $\C^n$ for each $n \geq 1$, i.e.,
\begin{equation*}
    e_{n,k}(j) = n^{-1/2}\exp\brac{-(2\pi i )\frac{jk}{n}} \text{\quad for $j = 0, 1, \cdots, n-1$.}
\end{equation*}
Note that $S$ and $T$ commute, since $S$ is diagonal. We can immediately read from the expressions of $S$ and $T$ that 
\begin{equation}
    S^{1/2} = \bigoplus_{n=1}^{\infty}\frac{1}{n^{3/2}}\sum_{k=1}^n \delta_{\frac{n(n-1)}{2}+k}^*\otimes\delta_{\frac{n(n-1)}{2}+k}
    \text{\quad and \quad}
    T^{1/2} = \bigoplus_{n=1}^\infty \frac{1}{n^{(3+\eps)/2}}\sum_{k=0}^{n-1}k^{1/2} e_{n,k}^*\otimes e_{n,k}.
    \label{eq:example-1}
\end{equation}
A standard computation verifies that
\begin{equation*}
    \norm{e_{n,k}}_{1} = \sqrt{n}
    \quad \text{ for all $n$ and $k$.}
\end{equation*}
Using the decomposition \eqref{eq:example-1}, we see that
\begin{equation*}
    \gamma(S^{1/2}:\ell_2 \to \ell_1) < \infty
    \text{ and }
    \gamma(T^{1/2}:\ell_2 \to \ell_1) < \infty
\end{equation*}
By \eqref{eq:P&P},
\begin{equation*}
    S^{1/2},T^{1/2} \in \Pi_2(\ell_2,\ell_1).
    \label{eq:example-2}
\end{equation*}
As a consequence, $S,T \in \Gamma_+$ and $S+T \in \Gamma_+$. It is shown in \cite[Proposition 3.1]{BOP18} that $S+T$ only has one-dimensional eigen-subspaces, and an eigendecomposition $\sum_k f_k^*\otimes f_k$ of $S+T$ must satisfy
\begin{equation*}
    \sum_k\norm{f_k}_1^2 = \infty.
    \label{eq:example-3}
\end{equation*}
By Theorem \ref{thm:2-summing factorization, sufficient Type A}, $(S+T)^{1/2} \notin \Pi_2(\ell_2,\ell_1)$.
\end{remark}

\bibliographystyle{plain}
\bibliography{Feichtinger}

\end{document}